\def\D{\displaystyle}
\def\supp{\mbox{supp}}
\def\la{\langle}
\def\ra{\rangle}
\def\R{\mathbb{R}}
\def\pd#1#2{\frac{\partial #1}{\partial #2}}
\def\vv<#1>{\langle#1\rangle}
\def\Z{\mathcal{Z}}
\def\XXint#1#2{\setbox0=\hbox{$#1{#2}{\int}$}{#2}\kern-.5\wd0 }
\def\XXint#1#2#3{{\setbox0=\hbox{$#1{#2#3}{\int}$}
     \vcenter{\hbox{$#2#3$}}\kern-.5\wd0}}
\def\vv<#1>{\langle#1\rangle}
\newtheorem{lemma}{Lemma}[section]
\newtheorem{proposition}{Proposition}[section]
\newtheorem{corollary}{Corollary}[section]
\theoremstyle{definition}
\theoremstyle{remark}
\newtheorem{remark}{Remark}[section]
\numberwithin{equation}{section}
\begin{document}
\title{Some Estimates of Fundamental Solution on noncompact manifolds with
time-dependent metrics}
\author{Chengjie Yu}
\address{Department of Mathematics, Shantou University, Shantou, Guangdong, P.R.China}
\email{cjyu@stu.edu.cn}
\maketitle
\markboth{Estimates of Fundamental Solutions}{Chengjie Yu}
\begin{abstract}
In this article, we obtain some further estimates of fundamental
solutions comparing to Chau-Tam-Yu \cite{CTY} and give some
applications of the estimates on asymptotic behaviors of fundamental
solutions.
\end{abstract}
\section{Introduction}
Let $\{g(t)|t\in[0,T]\}$ be a smooth family of complete Riemannian
metrics on noncompact manifold $M^n$ of dimension $n$ such that $g(t)$ satisfies:
\begin{equation}\label{evolution1}
\pd{}{t}g_{ij}(x,t)=2h_{ij}(x,t)
\end{equation}
on  $M\times [0,T]$, where $h_{ij}(x,t)$ is a smooth family of
symmetric tensors.

Without further confusions and indications, $\la\cdot,\cdot\ra$,
$\|\cdot\|$, $\Delta$, $\nabla$, ... etc. mean the time-dependent
inner product, norm, Laplacian operator and covariant derivative,...
etc.

Consider the equation:
\begin{equation}\label{eqn-schordinger}
\frac{\partial u}{\partial t}-\Delta u+qu=0.
\end{equation}
Let us make the following assumptions on the family $g(t)$ and
equation (\ref{eqn-schordinger}).

\begin{itemize}
    \item  [\textbf{ (A1)} ] $\|h\|, \|\nabla h\|$
     are uniformly bound on space-time, where the norm is taken with respect to $t$.
     \\
\item  [\textbf{ (A2)}] The sectional curvatures of the metrics
$g(t)$ are uniformly bounded on space-time.\\
\item  [\textbf{ (A3)}]$|q|,\|\nabla q\|,|\Delta q|$ are
uniformly bounded on space-time.
\end{itemize}

Let $H(t)$ be the trace of $h_{ij}(t)$ with respect to $g(t)$.

In \cite{CTY}, Chau-Tam-Yu, using the same trick as in Grigor'yan
\cite{Grigor'yan-1}, obtained some weighted local $L^2$-estimate of
$u$, and using this weighted local $L^2$-estimate, they obtained a
weighted $L^2$-estimate for the fundamental solution of equation
(\ref{eqn-schordinger}). In this article, we first, using the same
technic as in Grigor'yan \cite{Grigor'yan-UBD}, obtained some local
weighted $L^2$-estimates of $\nabla u$ and $\Delta u$. Then, by the
local weighted $L^2$-estimates, we get some $L^p$-estimates ($p\in
(0,2]$) of the gradient and Laplacian of the fundamental solution of
(\ref{eqn-schordinger}). Finally, as an application of the integral
estimates of the fundamental solution, we derive some asymptotic
behaviors of the fundamental solution which are the foundation of
the proof of non-positivity of Perelman's new Li-Yau-Hamilton type
expression in Chau-Tam-Yu \cite{CTY}.

\section{Some local integral estimates}

In \cite{CTY}, using the same trick as in Grigor'yan \cite{Grigor'yan-1},  Chau-Tam-Yu get the following local weighted $L^2$-estimate.
\begin{lemma}\label{lem-weighted-L2-0-order-estimate}
Let  $\Omega$ be a relative compact domain  of $M$ with smooth
boundary and let $K$ be a compact set with $K\subset\subset
\Omega$. Let $u$ be any solution to the problem:
\begin{equation}\label{eqn-dirichlet}
\left\{\begin{array}{l}u_t-\Delta u+qu=0,\  \text{in
$\Omega\times[0,T]$}\\u\big|_{\partial
\Omega\times[0,T]}=0\\\supp\, u(\cdot,0)\subset
K.\end{array}\right.
\end{equation}
 Let $f$
be a regular function with the constants $\gamma$ and $A$. Suppose
\begin{equation*}
\int_{\Omega}u^2dV_t\leq\frac{1}{f(t)}
\end{equation*}
for any $t>0$. Then there is a positive constant $C$ depending
only on $\gamma$, the uniform  upper bound of $|q|$ and $|H|$,
and a positive constant $D$ depending only on  $T$, $\gamma$ and
the uniformly  upper bound of $\|h\|$, such that
\begin{equation*}
\int_{\Omega}u^2(x,t)e^{\frac{r^2_{0}(x,K)}{Dt}}dV_t\leq\frac{4A}{f(t/\gamma)}e^{Ct}
\end{equation*}
for any $t>0$, where $r_0(x,K)$ denotes the distance between $x$ and
$K$ with respect to the initial metric $g(0)$.
\end{lemma}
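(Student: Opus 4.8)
The statement is a Grigor'yan-type ``integrated maximum principle'', carried out for the time-dependent metric $g(t)$ and the potential $q$, and then bootstrapped over a geometric sequence of time scales. The plan is: (1) record that all the $g(t)$ are uniformly comparable, so that $r_0(\cdot,K)$ is controlled for every $g(t)$; (2) prove a one-step weighted energy inequality for a general Gaussian-type weight; (3) iterate it on the scales $t\gamma^{-k}$, splitting $\Omega$ into a near and a far part at each stage, and sum the resulting geometric series using the regularity of $f$.

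\textbf{Step 1 (comparability).} For a fixed vector $X$, $\pd{}{t}\|X\|_{g(t)}^2=2h(X,X)$, so by (A1) $e^{-2\Lambda t}\|X\|_{g(0)}^2\le\|X\|_{g(t)}^2\le e^{2\Lambda t}\|X\|_{g(0)}^2$ with $\Lambda:=\sup\|h\|$; dually, $\|\nabla_{g(t)}r_0(\cdot,K)\|_{g(t)}^2\le e^{2\Lambda t}\|\nabla_{g(0)}r_0(\cdot,K)\|_{g(0)}^2\le e^{2\Lambda T}=:L$ a.e., a constant depending only on $T$ and $\sup\|h\|$. Also $\pd{}{t}dV_t=H\,dV_t$.

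\textbf{Step 2 (one-step weighted inequality).} For a Lipschitz $\xi(x,s)$ set $E(s):=\int_\Omega u^2(x,s)e^{\xi(x,s)}\,dV_s$. Differentiating, using (\ref{eqn-dirichlet}) and $u|_{\partial\Omega}=0$ to discard the boundary term, integrating $2u\Delta u\,e^\xi$ by parts once, and estimating the cross term by $|2\la\nabla u,u\nabla\xi\ra|\le 2|\nabla u|^2+\tfrac12 u^2\|\nabla\xi\|^2$, one obtains
\begin{equation*}
E'(s)\le\int_\Omega\left(\xi_s+\tfrac12\|\nabla\xi\|^2+H-2q\right)u^2e^\xi\,dV_s .
\end{equation*}
Hence, if $\xi$ satisfies $\xi_s+\tfrac12\|\nabla\xi\|^2\le 0$ and $C_1:=2\sup|q|+\sup|H|$, then $E(s_2)\le e^{C_1(s_2-s_1)}E(s_1)$ for $0\le s_1\le s_2$. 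Taking $\xi(x,s)=\frac{r_0^2(x,K)}{Ds}$, Step 1 gives $\xi_s+\tfrac12\|\nabla\xi\|^2\le \frac{r_0^2}{s^2}\big(-\tfrac1D+\tfrac{L}{D^2}\big)\le 0$ whenever $D\ge L$, and $\xi(\cdot,s)\equiv 0$ on $K$.

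\textbf{Step 3 (iteration over scales).} A single application on $[t/\gamma,t]$ is not enough, because at time $t/\gamma$ only the \emph{unweighted} bound $\int_\Omega u^2\,dV_{t/\gamma}\le 1/f(t/\gamma)$ is available while the weight $e^{\gamma r_0^2/(Dt)}$ there is large. Following Grigor'yan \cite{Grigor'yan-UBD}, I would instead, at the $k$-th stage, split $\Omega=\{r_0(\cdot,K)\le R_k\}\cup\{r_0(\cdot,K)>R_k\}$: on the near set the Gaussian weight at time $t/\gamma^{k}$ is bounded by $e^{R_k^2\gamma^{k}/(Dt)}$ and one inserts the unweighted bound $\le 1/f(t/\gamma^{k})$; the far set is absorbed, via Step 2 on $[t/\gamma^{k},t/\gamma^{k-1}]$, into the estimate already obtained at the coarser scale. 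Choosing $R_k$ increasing geometrically and $D$ large in terms of $L$ and $\gamma$ so that $\sum_k e^{-cR_k^2\gamma^{k}/t}$ converges, using the monotonicity of $f$ together with $f(t)\le Af(t/\gamma)$ to replace every $f(t/\gamma^{k})$ by $f(t/\gamma)$, and bounding each factor $e^{C_1(t-t/\gamma^{k})}$ by $e^{C_1 t}$, the geometric series sums to the constant $4A$ and $C_1$ inflates only by a $\gamma$-dependent factor. The relative compactness of $\Omega$ makes every term finite, and the leftover ``uncovered'' piece at stage $k$ tends to $0$ since the weight coefficients $e^{-cR_k^2\gamma^{k}/t}$ decay faster than $1/f(t/\gamma^{k})\le A^k/f(t)$ grows; letting $k\to\infty$ yields the asserted bound with $C=C(\gamma,\sup|q|,\sup|H|)$ and $D=D(T,\gamma,\sup\|h\|)$.

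\textbf{Main obstacle.} Steps 1–2 are routine once the evolution terms $H\,dV_t$ and the potential $q$ are carried through. The real work is the bookkeeping in Step 3: choosing the cutoff radii $R_k$ and the constant $D$ compatibly with the constraint $\xi_s+\tfrac12\|\nabla\xi\|^2\le 0$ on every interval, arranging the far-away contributions so that they telescope into the previous stage, and verifying both the convergence of the resulting series (to get exactly the factor $4A$) and the vanishing of the uncovered tail, all while tracking that $C$ and $D$ depend only on the stated quantities.
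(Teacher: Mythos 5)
The paper does not actually prove this lemma; it is quoted from Chau--Tam--Yu \cite{CTY}, who establish it by the method of Grigor'yan \cite{Grigor'yan-1}. So the only available comparison is with that referenced technique and with the analogous computations the paper does carry out for Lemmas \ref{lemma-local-wighted-L2-1-order-estimate} and \ref{lemma-local-weighted-L2-estimate-2-order}. Against that background your Steps 1 and 2 are correct and exactly the standard preliminaries: uniform comparability of the metrics $g(t)$, the $H\,dV_t$ term from $\partial_t dV_t$, integration by parts plus Cauchy--Schwarz giving $E'(s)\le\int_\Omega\big(\xi_s+\tfrac{1}{2}\|\nabla\xi\|^2+H-2q\big)u^2e^\xi\,dV_s$, and the condition $\xi_s+\tfrac{1}{2}\|\nabla\xi\|^2\le 0$ for $\xi=r_0^2(\cdot,K)/(Ds)$ once $D$ exceeds a fixed multiple of $L$ (you wrote $D\ge L$; the sharp threshold from this calculation is $D\ge 2L$, a harmless constant).

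Step 3, however, is where the entire content of the lemma lives, and what you have written is a description of a proof, not a proof: you sketch Grigor'yan's telescoping argument and then, in the ``Main obstacle'' paragraph, explicitly list the choices and verifications you have not made --- the radii $R_k$, the precise stage-$k$ weight (which must be truncated so that the constraint $\xi_s+\tfrac{1}{2}\|\nabla\xi\|^2\le 0$ persists across the split), the mechanism by which the far-field contribution telescopes into the coarser-scale estimate, the convergence of the resulting series to exactly the constant $4A$, and the vanishing of the uncovered tail. These are not routine bookkeeping; they are the argument. In particular the obvious shortcut --- integrating $E'\le C_1E$ with the single weight $\xi=\rho^2/(Ds)$ from $s=0^+$ to $s=t$ --- fails outright, because $E(0^+)$ is not $\int_K u^2(\cdot,0)\,dV_0$: the Gaussian tail of $u(\cdot,s)$ off $K$ makes $E(s)$ comparable in size to $\int_\Omega u^2(\cdot,s)\,dV_s\le 1/f(s)$, which blows up as $s\to 0^+$ in every application the paper cares about. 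Circumventing that is exactly what the recursive scheme with truncated weights and shifted start times $t/\gamma^k$ accomplishes, and spelling it out is the proof. As it stands the proposal correctly identifies the strategy but leaves the proof itself undone.
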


In this section, using basically the same trick as in Grigor'yan \cite{Grigor'yan-UBD}, we get some local weighted $L^2$ estimates
of $\nabla u$ and $\Delta u$.
\begin{lemma}\label{lemma-local-wighted-L2-1-order-estimate}
Let $u$ be the same as in Lemma \ref{lem-weighted-L2-0-order-estimate}. Then, there is a positive constant $D$ depending only on $\gamma$, $T$ and
the uniformly upper bound of $\|h\|$, and a positive
constant $C$ depending only on the uniformly upper bounds of $|H|$,
$|q|$, such that
\begin{equation*}
\int_{\Omega}(u^2+\|\nabla u\|^2)e^{\frac{r_{0}^2(x,K)}{Dt}}dV_t\leq
\frac{4Ae^{Ct}}{\D\int_0^{t}f(s/\gamma)ds}
\end{equation*}
for any $t>0$.
\end{lemma}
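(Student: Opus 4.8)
The plan is to derive a differential inequality for the weighted energy $J(t) = \int_\Omega (u^2 + \|\nabla u\|^2) e^{\xi} \, dV_t$, where $\xi = \xi(x,t)$ is a carefully chosen space-time weight of the form $\xi = \frac{r_0^2(x,K)}{D(t-\tau)}$ (or a regularized, globally defined Lipschitz surrogate for it, as in Grigor'yan's treatment, to avoid the singularity of $r_0^2$ on the cut locus). First I would record the Bochner-type identity under the flow: since the metric evolves by $\partial_t g_{ij} = 2h_{ij}$, differentiating $\|\nabla u\|^2 = g^{ij} u_i u_j$ in $t$ produces an extra $-2h(\nabla u,\nabla u)$ term beyond the usual $\partial_t(\nabla u) = \nabla(\partial_t u)$ contribution; combined with $u_t = \Delta u - qu$ and the commutation $\Delta \nabla u = \nabla \Delta u + \Rc * \nabla u + (\nabla h) * \nabla u$, one gets a parabolic inequality of the schematic form $(\partial_t - \Delta)\|\nabla u\|^2 \le -2\|\nabla^2 u\|^2 + C_1(\|\nabla u\|^2 + u^2) - 2\langle \nabla q, u\nabla u\rangle$, where $C_1$ depends only on the sectional curvature bound (A2), $\|h\|,\|\nabla h\|$ (A1), and $|q|,\|\nabla q\|$ (A3). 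The Cauchy–Schwarz on the last term absorbs it into the good $u^2$ and $\|\nabla u\|^2$ terms at the cost of enlarging $C_1$.

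Next I would run the standard weighted-energy computation: multiply the inequality for $u^2 + \|\nabla u\|^2$ by $e^{\xi}$, integrate over $\Omega$, and integrate by parts. The Dirichlet condition $u|_{\partial\Omega}=0$ kills the boundary term coming from $\int \Delta(u^2)e^\xi$; for the $\int \Delta\|\nabla u\|^2 \, e^\xi$ term the boundary contribution is $\int_{\partial\Omega} \partial_\nu\|\nabla u\|^2 e^\xi$, which is handled exactly as in Grigor'yan \cite{Grigor'yan-UBD} — one works first on a slightly smaller domain / uses that the relevant boundary integrand has the right sign, or more cleanly one notes the final estimate is obtained by a limiting/cutoff argument so this term can be discarded. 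The key algebraic point is the choice of $D$: after integration by parts the dangerous terms are $\int (u^2 + \|\nabla u\|^2)(\partial_t \xi + \|\nabla \xi\|^2 \text{-type contributions}) e^\xi$, and since $\partial_t \xi = -\frac{r_0^2}{D(t-\tau)^2}$ while $\|\nabla_t \xi\|^2 \le \frac{4 r_0^2}{D^2(t-\tau)^2}\cdot(1 + O(\|h\| t))$ (the factor accounting for the discrepancy between $\|\nabla_{g(t)} r_0\|$ and $\|\nabla_{g(0)} r_0\| = 1$, controlled by (A1) over $[0,T]$), choosing $D$ large enough — depending on $\gamma$, $T$, and $\sup\|h\|$ — makes $\partial_t\xi + \|\nabla_t\xi\|^2/(\text{const}) \le 0$, so these terms are nonpositive and drop out.

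With that choice, $J(t)$ satisfies $J'(t) \le C J(t)$ with $C$ depending only on $\sup|H|$ and $\sup|q|$ (the constant $C_1$ above, after the volume-form derivative $\partial_t dV_t = H\, dV_t$ is folded in). The last step is to feed in the zeroth-order input. Rather than use Lemma \ref{lem-weighted-L2-0-order-estimate} directly with the hypothesis $\int_\Omega u^2 \le 1/f(t)$, I expect one must first upgrade this to a bound on $\int_\Omega \|\nabla u\|^2$: integrating the plain (unweighted) energy inequality $\frac{d}{dt}\int u^2 \le -2\int\|\nabla u\|^2 + C\int u^2$ in time and using $\int u^2 \le 1/f(t)$ gives $\int_0^t \int_\Omega \|\nabla u\|^2 \, ds \lesssim \int_0^t f(s)^{-1}\,ds$; this is why the denominator in the conclusion is $\int_0^t f(s/\gamma)\,ds$ rather than $f(t/\gamma)$. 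Then applying the weighted estimate with a time-shifted initial slice $t_0 \in (0,t)$ and optimizing over $t_0$, exactly paralleling the zeroth-order proof, produces the stated bound $J(t) \le 4A e^{Ct}/\int_0^t f(s/\gamma)\,ds$. The main obstacle I anticipate is bookkeeping the Bochner identity under a time-dependent metric so that every error term is genuinely controlled by the stated constants — in particular making sure the $\nabla h$ and curvature terms only contribute to $C$ (via $|H|$, $|q|$) or to $D$ (via $\|h\|$), and not to some uncontrolled quantity — together with the technical but routine regularization of the weight $r_0^2$ near the cut locus.
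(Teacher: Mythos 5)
Your Bochner-based computation is a genuine departure from the paper's argument, and it is the wrong departure: the paper never forms $\Delta\|\nabla u\|^2$. Instead, it differentiates $\int_\Omega\|\nabla u\|^2 e^\xi\,dV_t$ in $t$, integrates by parts once to turn $2\int\langle\nabla u_t,\nabla u\rangle e^\xi$ into $-2\int u_t\Delta u\,e^\xi-2\int u_t\langle\nabla u,\nabla\xi\rangle e^\xi$ (the boundary term vanishes because $u_t=0$ on $\partial\Omega$), and then substitutes $u_t=\Delta u-qu$. This produces a good $-2\int|\Delta u|^2 e^\xi$ term and cross terms that are absorbed by $\xi_t+8\|\nabla\xi\|^2\le 0$; no Ricci/sectional curvature bound, no $\|\nabla h\|$, and no $\|\nabla q\|$ enter. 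Commuting $\Delta$ past $\nabla$, as you propose, unavoidably imports a curvature term and a $\nabla h$ term, and the $\langle\nabla q,u\nabla u\rangle$ term imports $\|\nabla q\|$, contradicting the lemma's statement that $C$ depends only on $|H|$ and $|q|$. You noticed this inconsistency yourself but did not resolve it; the resolution is to avoid Bochner entirely.

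The more serious gap is in how you obtain the $\int_0^t f(s/\gamma)\,ds$ denominator. The paper's crucial device is a Riccati-type ODE. Set $Q=\int e^{-C_4 t}(\|\nabla u\|^2+u^2)e^\xi\,dV_t$, and $E,F,G$ the $e^{-C_4t}$-weighted integrals of $u^2e^\xi$, $\|\nabla u\|^2\|\nabla\xi\|^2 e^\xi$, $|\Delta u|^2 e^\xi$. The energy inequality gives $Q'\le -(E+F+G)$. Then one integrates by parts once more in the elementary identity
$$\int_\Omega(\|\nabla u\|^2+u^2)e^\xi\,dV_t=-\int_\Omega u\Delta u\,e^\xi\,dV_t-\int_\Omega u\langle\nabla u,\nabla\xi\rangle e^\xi\,dV_t+\int_\Omega u^2e^\xi\,dV_t$$
and applies Cauchy--Schwarz to get $Q\le E^{1/2}(G^{1/2}+F^{1/2}+E^{1/2})$, hence $Q'\lesssim -Q^2/E$, hence $(1/Q)'\gtrsim 1/E$. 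Inserting the zeroth-order bound $E(t)\le 4A/f(t/\gamma)$ from Lemma~\ref{lem-weighted-L2-0-order-estimate} and integrating from $0$ to $t$ gives precisely the denominator $\int_0^t f(s/\gamma)\,ds$. Your proposed route---a Gronwall bound $J'\le CJ$ combined with ``a time-shifted initial slice $t_0$ and optimizing''---cannot reproduce this. A Gronwall bound requires a pointwise-in-time bound on $J(t_0)$, but integrating the plain energy inequality only yields $\int_0^t\int_\Omega\|\nabla u\|^2\,ds\lesssim\int_0^t f(s)^{-1}\,ds$; the pigeonhole $t_0$ this produces controls $J(t_0)$ by roughly $\tfrac1t\int_0^t f^{-1}$, which is a different and generally larger quantity than $1/\int_0^t f$. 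The quadratic Riccati structure, borrowed from Grigor'yan's derivatives-of-the-heat-kernel paper, is what converts the pointwise hypothesis into a time-integrated denominator, and that idea is absent from your plan.
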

\begin{proof} Let $D$ be larger than the $D$ in the statement of Lemma \ref{lem-weighted-L2-0-order-estimate}
such that the function $\xi=\frac{r_0(x,K)}{Dt}$ satisfies
\begin{equation*}
\xi_t+8\|\nabla\xi\|^2\leq 0
\end{equation*}
on $M\times(0,T]$. By the boundary conditions, $u_t=0$ on $\partial \Omega\times[0,T]$. By integration by parts,
 \begin{eqnarray*}
 & &\frac{d}{dt}\int_{\Omega}\|\nabla u\|^2e^\xi dV_t\\
&=&2\int_{\Omega}\vv<\nabla u_t,\nabla u>e^\xi dV_t-2\int_\Omega h(\nabla u,\nabla u)e^\xi dV_t+\int_{\Omega}\|\nabla u\|^2\xi_te^\xi dV_t+\int_{\Omega}H\|\nabla u\|^2e^\xi dV_t\\
&=&-2\int_\Omega u_t\Delta ue^\xi dV_t-2\int_\Omega u_t\vv<\nabla u,\nabla \xi>e^\xi dV_t-2\int_\Omega h(\nabla u,\nabla u)e^\xi dV_t+\int_{\Omega}\|\nabla u\|^2\xi_te^\xi dV_t\\
& &+\int_\Omega H\|\nabla u\|^2e^\xi dV_t\\
&\leq&-2\int_\Omega|\Delta u|^2e^\xi-2\int_\Omega \Delta u\vv<\nabla u,\nabla \xi>e^\xi dV_t-8\int_\Omega\|\nabla u\|^2\|\nabla \xi\|^2e^\xi dV_t\\
&    &+2\int_\Omega qu\Delta ue^\xi dV_t+2\int_\Omega qu\vv<\nabla u,\nabla \xi>dV_t-2\int_\Omega h(\nabla u,\nabla u)e^\xi dV_t+\int_\Omega H\|\nabla u\|^2e^\xi dV_t\\
&\leq&C_1\int_\Omega(\|\nabla u\|^2+u^2)e^\xi dV_t-\int_\Omega\|\nabla u\|^2\|\nabla \xi\|^2 e^\xi dV_t-\int_\Omega|\Delta u|^2 e^\xi dV_t
\end{eqnarray*}
where $C_1$ is a positive constant depending only the upper bounds of $|q|,\|X\|$ and $\|h\|$.

Similar computations give us that
\begin{equation*}
\frac{d}{dt}\int_\Omega u^2 e^\xi dV_t\leq C_2\int_\Omega u^2 e^\xi dV_t-\int_\Omega \|\nabla u\|^2 e^\xi dV_t-\int_\Omega u^2\|\nabla \xi\|^2 e^\xi dV_t.
\end{equation*}
with $C_2>0$ depending only on the upper bounds of $|q|,\|h\|$ and $\|X\|$.

Therefore,
 \begin{equation*}
 \begin{split}
 &\frac{d}{dt}\int_{\Omega}(\|\nabla u\|^2+u^2)e^\xi dV_t\\
 \leq& C_3\int_{\Omega}(\|\nabla  u\|^2+u^2)e^\xi dV_t-\int_\Omega\|\nabla u\|^2\|\nabla \xi\|^2e^\xi dV_t-\int_{M}|\Delta u|^2e^\xi dV_t
 \end{split}
 \end{equation*}
 where $C_3=C_1+C_2$, and
 \begin{equation}\label{eqn-d-Q}
 \begin{split}
 &\frac{d}{dt}\int_{\Omega}e^{-C_4 t}(\|\nabla u\|^2+u^2)e^\xi dV_t\\
 \leq&
 -\int_{\Omega}e^{-C_4t}(\|\nabla  u\|^2+u^2)e^\xi dV_t-\int_{\Omega}e^{-C_4t}|\Delta u|^2e^\xi
 dV_t-\int_\Omega e^{-C_4t}\|\nabla u\|^2\|\nabla \xi\|^2e^\xi dV_t
 \end{split}
 \end{equation}
 where $C_4=C_3+1$. On the other hand,
\begin{equation}\label{eqn-Q-E-F-G}
\begin{split}
&\int_{\Omega}(\|\nabla u\|^2+u^2)e^\xi dV_t\\
=&-\int_{\Omega}u\Delta ue^\xi dV_t-\int_{\Omega}u\vv<\nabla
u,\nabla\xi>e^\xi dV_t+\int_{\Omega}u^2e^\xi dV_t\\
\leq&\Big(\int_\Omega|u|^2e^\xi dV_t\Big)^{1/2}\Big[\Big(\int_\Omega|\Delta u|^2e^\xi dV_t\Big)^{1/2}+\Big(\int_{\Omega}\|\nabla u\|^2\|\nabla\xi\|^2e^\xi dV_t\Big)^{1/2}+\Big(\int_\Omega|u|^2e^\xi
dV_t\Big)^{1/2}\Big].\\
\end{split}
\end{equation} Let
\begin{equation*}
Q=\int_{\Omega}e^{-C_4 t}(\|\nabla u\|^2+u^2)e^\xi dV_t,
\end{equation*}
\begin{equation*}
E=\int_\Omega e^{-C_4t}u^2e^\xi dV_t,\  F=\int_\Omega e^{-C_4t}\|\nabla u\|^2\|\nabla \xi\|e^\xi dV_t,\ \mbox{and}\ G=\int_\Omega e^{-C_4t}|\Delta u|^2 e^\xi dV_t.
\end{equation*}
Then, by equation (\ref{eqn-d-Q}) and equation (\ref{eqn-Q-E-F-G}),
\begin{eqnarray*}
Q'\leq-(E+F+G)\leq-(E^{1/2}+F^{1/2}+G^{1/2})^2\leq-\frac{Q^2}{E}.
\end{eqnarray*}
This implies that
\begin{equation}\label{derivative-eqn}
\Big(\frac{1}{Q}\Big)'\geq \frac{1}{E}.
\end{equation}

We can assume that $C_4$ is greater than the constant $C$ in the statement of Lemma \ref{lem-weighted-L2-0-order-estimate}.
By Lemma \ref{lem-weighted-L2-0-order-estimate},
\begin{equation*}
E\leq\frac{4A}{f(t/\gamma)}.
\end{equation*}
By equation (\ref{derivative-eqn}),
\begin{equation*}
Q(t)\leq\frac{1}{\D\int_{0}^{t}\frac{f(s/\gamma)}{4A}ds}.
\end{equation*}
So,
\begin{equation*}
\int_\Omega \|\nabla u\|^2 e^{\frac{r_0^{2}(x,K)}{Dt}}dV_t\leq \frac{4Ae^{C_4t}}{\D\int_{0}^{t}f(s/\gamma)ds}.
\end{equation*}
\end{proof}
\begin{lemma}\label{lemma-local-weighted-L2-estimate-2-order}
Let $u$ be the same as in Lemma \ref{lem-weighted-L2-0-order-estimate}. Then, there is a positive constant $D$ depending only on $\gamma$, $T$ and
the uniformly upper bound of $\|h\|$, and a positive
constant $C$ depending only on the uniformly upper bounds of $\|h\|,\|\nabla q\|$,
and $|q|$, such that
\begin{equation*}
\int_{\Omega}(u^2+\|\nabla u\|^2+\|\Delta u\|^2)e^{\frac{r_{0}^2(x,K)}{Dt}}dV_t\leq
\frac{24Ae^{Ct}}{\D\int_0^{t}\int_0^{\sigma}f(s/\gamma)dsd\sigma}
\end{equation*}
for any $t>0$.
\end{lemma}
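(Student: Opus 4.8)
The plan is to iterate the argument of Lemma \ref{lemma-local-wighted-L2-1-order-estimate} one more time, now incorporating $\|\Delta u\|^2$ into the energy quantity. The key observation is that $w=\Delta u$ (roughly) satisfies an equation of the same type as $u$, so differentiating $\int_\Omega \|\Delta u\|^2 e^\xi\, dV_t$ in time and integrating by parts should, after using the bounds on $\|h\|$, $|q|$ and $\|\nabla q\|$ (the last one arising from commuting $\Delta$ past the $qu$ term), yield
\begin{equation*}
\frac{d}{dt}\int_\Omega \|\Delta u\|^2 e^\xi\, dV_t \leq C_1'\int_\Omega(\|\Delta u\|^2+\|\nabla u\|^2+u^2)e^\xi\, dV_t - \int_\Omega \|\nabla\Delta u\|^2 e^\xi\, dV_t - \int_\Omega \|\Delta u\|^2\|\nabla\xi\|^2 e^\xi\, dV_t.
\end{equation*}
Here one also needs commutator terms between $\Delta$ and $\nabla$, which produce curvature, controlled by assumption (A2), and terms involving $\nabla h$, controlled by (A1). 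Adding this to the two inequalities already established in the proof of Lemma \ref{lemma-local-wighted-L2-1-order-estimate} and absorbing constants into $e^{-C_4 t}$ as before gives, with
\begin{equation*}
Q=\int_\Omega e^{-C_4 t}(u^2+\|\nabla u\|^2+\|\Delta u\|^2)e^\xi\, dV_t,
\end{equation*}
a differential inequality $Q' \leq -(\text{sum of nonnegative terms})$.

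Next I would run the same Cauchy–Schwarz / interpolation step as in equation (\ref{eqn-Q-E-F-G}), but now one degree higher: integrating by parts twice, $\int_\Omega(u^2+\|\nabla u\|^2+\|\Delta u\|^2)e^\xi\, dV_t$ should be bounded by a product of a factor controlled by $E=\int_\Omega e^{-C_4 t}(u^2+\|\nabla u\|^2)e^\xi\, dV_t$ (i.e. the quantity from the previous lemma) and a factor controlled by the dissipation terms $\int_\Omega e^{-C_4 t}\|\nabla\Delta u\|^2 e^\xi$, $\int_\Omega e^{-C_4 t}\|\Delta u\|^2\|\nabla\xi\|^2 e^\xi$, plus lower-order pieces. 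This gives $Q'\leq -Q^2/E$, hence $(1/Q)'\geq 1/E$. By Lemma \ref{lemma-local-wighted-L2-1-order-estimate} (with $C_4$ chosen larger than the $C$ there), $E\leq 4Ae^{(C_4-C)t}/\int_0^t f(s/\gamma)\,ds \leq 4A/\int_0^t f(s/\gamma)\,ds$ after adjusting $C_4$; actually it is cleaner to write $E \le \big(\text{const}\big)\big/\int_0^t f(s/\gamma)\,ds$. Integrating $(1/Q)'\geq 1/E$ from $0$ to $t$ and using $Q(0^+)^{-1}\ge 0$ yields
\begin{equation*}
Q(t) \leq \frac{1}{\D\int_0^t \frac{1}{E(s)}\,ds} \leq \frac{6A}{\D\int_0^t\int_0^\sigma f(s/\gamma)\,ds\,d\sigma},
\end{equation*}
and undoing the $e^{-C_4 t}$ normalization gives the stated estimate with $C=C_4$ and the constant $24A$ (the factor $24 = 4\cdot 6$, or whatever the bookkeeping produces; the precise numerical constant is unimportant).

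The main obstacle is the first step: verifying that $\|\Delta u\|^2$ obeys the right differential inequality. The difficulty is twofold. First, $\Delta$ does not commute with $\partial_t$ when the metric moves — one picks up a term like $2h^{ij}\nabla_i\nabla_j u = 2\langle h, \nabla^2 u\rangle$ plus terms in $\nabla h$, so one must check these are absorbed by the bounds in (A1) together with a good term $-\epsilon\int\|\nabla^2 u\|^2 e^\xi$ extracted from the dissipation (this is why controlling $\|\nabla^2 u\|$, not just $\|\Delta u\|$, via the Bochner formula and (A2), is needed). Second, commuting $\Delta$ with $\nabla$ in the integration by parts introduces the Ricci curvature, which under (A2) is bounded; one must make sure all such terms land on the "good" side or are dominated by $C_1'\int(\|\Delta u\|^2 + \|\nabla u\|^2 + u^2)e^\xi$. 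Once the analogue of the two displayed inequalities in the previous proof is in hand for the third-order quantity, the rest is a verbatim repetition of the ODE argument, integrated one extra time, which is why the denominator picks up the double integral $\int_0^t\int_0^\sigma f(s/\gamma)\,ds\,d\sigma$.
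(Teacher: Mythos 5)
Your overall strategy is the right one: add $|\Delta u|^2$ to the weighted energy $Q$, derive $Q'\le -(\text{nonnegative})$, interpolate to get $(1/Q)'\ge c/(E_0+E_1)$, feed in Lemma \ref{lemma-local-wighted-L2-1-order-estimate} to bound $E_0+E_1$, and integrate twice to produce the iterated integral in the denominator; the bookkeeping ``$24A=4\cdot 6A$'' matches the paper as well. But you misidentify the one genuinely new step, which you yourself flag as ``the main obstacle.'' You are correct that $\partial_t(\Delta u)=\Delta u_t-2h_{ij}u_{ij}-2h_{ik;i}u_k+\langle\nabla H,\nabla u\rangle$ produces, inside $\frac{d}{dt}\int_\Omega|\Delta u|^2e^\xi\,dV_t$, a term $-4\int_\Omega \Delta u\,(h_{ij}u_{ij}+h_{ik;i}u_k)e^\xi\,dV_t$ involving the full Hessian. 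The paper, however, does not control this by Bochner and (A2). It observes that $h_{ij}u_{ij}+h_{ik;i}u_k=\nabla_i(h_{ij}u_j)$ is a divergence, integrates by parts once (legitimate since $\Delta u=0$ on $\partial\Omega$), and obtains $4\int_\Omega h(\nabla\Delta u,\nabla u)e^\xi\,dV_t+4\int_\Omega(\Delta u)\,h(\nabla u,\nabla\xi)e^\xi\,dV_t$, which are absorbed by Cauchy--Schwarz into the dissipation $-2\int_\Omega\|\nabla\Delta u\|^2e^\xi$, $-8\int_\Omega|\Delta u|^2\|\nabla\xi\|^2e^\xi$, and the lower-order $C\int_\Omega(\|\nabla u\|^2+|\Delta u|^2)e^\xi$. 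The Hessian never reappears, no Bochner formula is used, and $\Delta$ is never commuted with $\nabla$, so no Ricci curvature ever enters.

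This is not just a stylistic difference. First, the lemma asserts that $C$ depends only on the bounds for $\|h\|$, $\|\nabla q\|$ and $|q|$ --- no curvature --- which is consistent only with the divergence trick and not with a route that invokes (A2). Second, your Bochner alternative has a real gap: $\Omega$ has a boundary, and although $u$, $u_t$ and $\Delta u$ all vanish on $\partial\Omega\times[0,T]$, the Hessian $\nabla^2 u$ does not (on $\partial\Omega$ one has $\nabla u=(\partial_n u)n$ and $u_{nn}=-H_{\partial\Omega}\,\partial_n u$), so the weighted Bochner identity produces a boundary term $\oint_{\partial\Omega}u_ju_{ij}n^ie^\xi=-\oint_{\partial\Omega}H_{\partial\Omega}(\partial_n u)^2e^\xi$ of uncontrolled sign for a general relatively compact $\Omega$, and you give no way to handle it. Once the $h$-term is handled by the divergence trick, the remainder of your sketch --- the interpolation $E_2\le E_1^{1/2}E_3^{1/2}+E_1^{1/2}\tilde E_2^{1/2}$, the resulting $Q'\le -Q^2/(6(E_0+E_1))$, and the double integration --- does coincide with the paper.
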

\begin{proof} Let $D$ be larger than the $D$ in the statement of Lemma \ref{lemma-local-wighted-L2-1-order-estimate}
such that the function $\xi=\frac{r_0(x,K)}{Dt}$ satisfies
$$\xi_t+8\|\nabla\xi\|^2\leq 0$$
on $M\times(0,T]$. By the boundary condition, $\Delta u=0$ on $\partial\Omega\times[0,\delta]$.
So, integration by parts is valid in the following computations.
\begin{eqnarray*}
& &\frac{d}{dt}\int_\Omega |\Delta u|^2e^\xi dV_t\\
&=&2\int_{\Omega}\Delta u\Delta u_te^\xi dV_t-4\int_\Omega
\Delta u (h_{ij}u_{ij}+h_{ik;i}u_k)e^\xi
dV_t\\
& &+2\int_{\Omega}\Delta  u\vv<\nabla H,\nabla u>e^\xi
dV_t+\int_\Omega|\Delta u|^2e^\xi\xi_t
dV_t+\int_{\Omega}H|\Delta u|^2e^\xi dV_t\\
&\leq&-2\int_\Omega \vv<\nabla\Delta u,\nabla u_t>e^\xi dV_t-2\int_\Omega\Delta u\vv<\nabla u_t,\nabla \xi>e^\xi dV_t+4\int_\Omega h(\nabla\Delta u,\nabla u)e^\xi dV_t\\
&&+4\int_\Omega(\Delta u) h(\nabla u,\nabla\xi)e^\xi dV_t+2\int_{\Omega}\Delta  u\vv<\nabla H,\nabla u>e^\xi dV_t-8\int_\Omega|\Delta u|^2\|\nabla\xi\|^2e^\xi dV_t\\
&&+\int_{\Omega}H|\Delta u|^2e^\xi dV_t\\
&=&-2\int_\Omega \|\nabla\Delta u\|^2e^\xi dV_t-2\int_\Omega \Delta u\vv<\nabla\Delta u,\nabla \xi>e^\xi dV_t-8\int_\Omega|\Delta u|^2\|\nabla\xi\|^2e^\xi dV_t\\
&&+2\int_\Omega \vv<\nabla\Delta u,\nabla(qu)>e^\xi dV_t+2\int_\Omega \Delta u\vv<\nabla(qu),\nabla \xi>e^\xi dV_t+4\int_\Omega h(\nabla\Delta u,\nabla u)e^\xi dV_t\\
&&+4\int_\Omega(\Delta u) h(\nabla u,\nabla\xi)e^\xi dV_t+2\int_{\Omega}\Delta  u\vv<\nabla H,\nabla u>e^\xi dV_t+\int_{\Omega}H|\Delta u|^2e^\xi dV_t\\
&\leq& C_1\int_\Omega(u^2+\|\nabla u\|^2+|\Delta u|^2)e^\xi dV_t-\int_\Omega \|\nabla \Delta u\|^2e^\xi dV_t-\int_\Omega|\Delta u|^2\|\nabla\xi\|^2e^\xi dV_t
\end{eqnarray*}
where $C_1>0$ depends on the upper bounds of $|q|,\|\nabla q\|$ and $\|h\|$.

Hence, combining the computations in the proof of Lemma \ref{lemma-local-wighted-L2-1-order-estimate}, we have
\begin{equation*}
\begin{split}
&\frac{d}{dt}\int_\Omega(u^2+\|\nabla u\|^2+|\Delta u|^2)e^\xi dV_t\\
\leq &C_2\int_\Omega(u^2+\|\nabla u\|^2+|\Delta u|^2)e^\xi dV_t-\int_\Omega\|\nabla \Delta u\|^2e^\xi dV_t-\int_{\Omega}|\Delta u|^2e^\xi dV_t\\
&-\int_{\Omega}\|\nabla u\|^2e^\xi dV_t-\int_{\Omega}|\Delta u|^2\|\nabla \xi\|^2e^\xi dV_t-\int_\Omega \|\nabla u\|^2\|\nabla \xi\|^2e^\xi dV_t-\int_\Omega u^2\|\nabla\xi\|^2e^\xi dV_t\\
\end{split}
\end{equation*}
where $C_2>0$ depends on the upper bounds of $|q|,\|\nabla q\|$ and $\|h\|$.

Then,
\begin{equation*}
\begin{split}
\frac{d}{dt} Q:=&\frac{d}{dt}\int_\Omega e^{-C_3t}(u^2+\|\nabla u\|^2+|\Delta u|^2)e^\xi dV_t\\
\leq&-\int_\Omega e^{-C_3t}\|\nabla \Delta u\|^2e^\xi dV_t-\int_{\Omega} e^{-C_3t}|\Delta u|^2e^\xi dV_t-\int_{\Omega} e^{-C_3t}\|\nabla u\|^2e^\xi dV_t-\int_\Omega e^{-C_3t} u^2e^\xi dV_t\\
&-\int_{\Omega} e^{-C_3t}|\Delta u|^2\|\nabla \xi\|^2e^\xi dV_t-\int_\Omega  e^{-C_3t}\|\nabla u\|^2\|\nabla \xi\|^2e^\xi dV_t-\int_\Omega e^{-C_3t} u^2\|\nabla\xi\|^2e^\xi dV_t\\
:=&-E_3-E_2-E_1-E_0-\tilde E_2-\tilde E_1-\tilde E_0,
\end{split}
\end{equation*}
where $C_3=C_2+1$. On the other hand,
\begin{equation*}
\begin{split}
&\int_\Omega (\Delta u)^2e^\xi dV_t\\
=&-\int_\Omega \vv<\nabla u,\nabla \Delta u>e^\xi dV_t-\int_\Omega \Delta u\vv<\nabla u,\nabla \xi>e^\xi dV_t\\
\leq&\Big(\int_\Omega\|\nabla u\|^2e^\xi dV_t\Big)^\frac{1}{2}\Big(\int_\Omega\|\nabla\Delta u\|^2e^\xi dV_t\Big)^\frac{1}{2}+\Big(\int_\Omega\|\nabla u\|^2e^\xi dV_t\Big)^\frac{1}{2}\Big(\int_\Omega|\Delta u|^2\|\nabla \xi\|^2e^\xi dV_t\Big)^\frac{1}{2}.
\end{split}
\end{equation*}
Hence,
\begin{equation*}
E_2\leq E^{1/2}_3E^{1/2}_1+E_1^{1/2}\tilde E_{2}^{1/2}.
\end{equation*}
Then,
\begin{eqnarray*}
& &(E_0+E_1+E_2+E_3+\tilde E_0+\tilde E_1+\tilde E_2)(E_0+E_1)\\
&\geq& E_0^{2}+E_1^{2}+E_1E_3+E_1\tilde E_2\\
&\geq&E_0^{2}+E_1^{2}+\frac{(E^{1/2}_1E^{1/2}_3+E^{1/2}_1\tilde
E^{1/2}_{2})^2}{2}\\
&\geq& \frac{E^{2}_0+E_1^{2}+E_2^{2}}{2}\\
&\geq&\frac{Q^2}{6}.
\end{eqnarray*}
Therefore,
\begin{equation*}
\frac{d Q}{dt}\leq -\frac{Q^2}{6(E_0+E_1)},
\end{equation*}
and
\begin{equation}
\Big(\frac{1}{Q}\Big)'\geq \frac{1}{6(E_0+E_1)}.
\end{equation}

We can assume that $C_3$ is bigger than the constant $C$ in the statement of Lemma \ref{lemma-local-wighted-L2-1-order-estimate}.
Then
\begin{equation*}
E_0+E_1\leq \frac{4A}{\int_0^{t}f(s/\gamma)ds},
\end{equation*}
and
\begin{equation*}
Q\leq \frac{24 A}{\int_0^{t}\int_{0}^{\sigma}f(s/\gamma)dsd\sigma}.
\end{equation*}
This completes the proof.
\end{proof}
\section{Some integral estimates of fundamental solutions}
Let $\mathcal{Z}(x,t;y,s)$ and $\mathcal Z_k(x,t;y,s)$ be the same as in Chau-Tam-Yu \cite{CTY}. In \cite{CTY},
Chau-Tam-Yu get the following weighted $L^2$-estimates of $\mathcal{Z}$.

\begin{proposition}\label{prop-weighted-L2-Z}
There are some positive constants $C$ and $D$ with $C$ depending
only on $T,n,$ the lower bound of the Ricci curvature of the
initial metric and the  upper bounds of $|q|$ and $|h|$, and $D$
depending only on $T$ and the   upper bound of $|h|$, such that
for $0\le s<t\le T$,
\begin{equation*}
\int_M\mathcal{Z}^2(x,t;y,s)e^{\frac{r^2_{0}(x,y)}{D(t-s)}}dV_t(x)\leq
\frac{C}{V_y^{0}(\sqrt{t-s})}\ \ \mbox{and}
\end{equation*}
\begin{equation*}
\int_M\mathcal{Z}^2(x,t;y,s)e^{\frac{r^2_{0}(x,y)}{D(t-s)}}dV_s(y)\leq
\frac{C}{V_x^{0}(\sqrt{t-s})}.
\end{equation*}
\end{proposition}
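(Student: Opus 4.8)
The plan is to follow Grigor'yan's method and deduce the weighted estimate from Lemma~\ref{lem-weighted-L2-0-order-estimate} together with a matching un‑weighted (``on‑diagonal'') $L^2$ bound. Fix $y\in M$ and $s\in[0,T)$. Exhaust $M$ by relatively compact domains with smooth boundary, $y\in\Omega_1\subset\subset\Omega_2\subset\subset\cdots\uparrow M$, and for $\varepsilon>0$ set $K_\varepsilon=\cl{B_{g(0)}(y,\varepsilon)}$. Approximating $\delta_y$ by a nonnegative, suitably normalized function supported in $K_\varepsilon$ and solving the Dirichlet problem (\ref{eqn-dirichlet}) on $\Omega_k\times[s,T]$ produces functions $u=u_{k,\varepsilon}$ which converge, as $\varepsilon\to0$ and $k\to\infty$, to $\Z(\cdot,\cdot\,;y,s)$; it then suffices to prove the stated bound for each such $u$ with constants independent of $k$ and $\varepsilon$ and pass to the limit by Fatou.

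The first main step is the un‑weighted bound $\int_{\Omega_k}u^2(x,t)\,dV_t(x)\le 1/f(t-s)$ with $f(\tau):=c\,V_y^0(\sqrt\tau)$ for an appropriate $c$. To obtain this I would differentiate, using $\partial_t\,dV_t=H\,dV_t$ and the equation, to get $\tfrac{d}{dt}\int_{\Omega_k}u^2\,dV_t=-2\int\|\nabla u\|^2\,dV_t+\int(H-2q)u^2\,dV_t$, bound the zeroth‑order term by $C\int u^2\,dV_t$ via (A1) and (A3), and feed the resulting differential inequality into Grigor'yan's ODE lemma using a Faber--Krahn inequality for $g(t)$. Such a Faber--Krahn inequality holds uniformly in $t\in[0,T]$ because the metrics $g(t)$ are all uniformly equivalent to $g(0)$ (integrate (A1)), while $g(0)$ enjoys relative volume comparison (Bishop--Gromov) from its lower Ricci bound; the latter also shows $V_y^0$ is doubling, so $\tau\mapsto V_y^0(\sqrt\tau)$ is a regular function with constants $\gamma,A$ depending only on $n$, $T$ and the stated curvature and potential bounds. (Alternatively one simply quotes the on‑diagonal heat‑kernel estimate of \cite{CTY}.)

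With $f$ in hand, apply Lemma~\ref{lem-weighted-L2-0-order-estimate} on $\Omega_k\times[s,T]$ with $K=K_\varepsilon$, translating the time variable so the initial time is $s$. This gives
\begin{equation*}
\int_{\Omega_k}u^2(x,t)e^{\frac{r_0^2(x,K_\varepsilon)}{D(t-s)}}\,dV_t(x)\le\frac{4A\,e^{C(t-s)}}{f((t-s)/\gamma)}\le\frac{C'}{V_y^0(\sqrt{t-s})},
\end{equation*}
where the last step uses the doubling of $V_y^0$ and $e^{C(t-s)}\le e^{CT}$, so that $C',D$ have exactly the advertised dependences. Letting $\varepsilon\to0$ (so $r_0(x,K_\varepsilon)\nearrow r_0(x,y)$) and $k\to\infty$ (so $u_{k,\varepsilon}\to\Z$), Fatou yields the first inequality. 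For the second inequality I would invoke the duality $\Z(x,t;y,s)=\Z^{*}(y,s;x,t)$, where $\Z^{*}$ is the fundamental solution of the conjugate backward equation; under the time reversal $\bar g(\tau):=g(t-\tau)$ this conjugate equation becomes a forward equation of the form (\ref{eqn-schordinger}) whose data $\bar h=-h$, $\bar q=q$ still satisfy (A1) and (A3) and whose metric $\bar g(0)=g(t)$ has a lower Ricci bound controlled by (A2); rerunning the argument for $\Z^{*}$ and re‑expressing in $\Z$ gives the second estimate with the claimed constants.

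The step I expect to be the main obstacle is the un‑weighted on‑diagonal $L^2$ bound: producing a Faber--Krahn / mean‑value inequality that is uniform in $t\in[0,T]$ for the evolving family $g(t)$, correctly tracking that the resulting $f$ is regular with the stated constant dependence, and absorbing the correction terms from $\partial_t g$ and from $q$. A secondary technical point is the justification that the Dirichlet approximations $u_{k,\varepsilon}$ converge to $\Z$ and that the limits may be taken inside the integrals.
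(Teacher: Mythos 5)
This proposition is stated in the paper but not proved there; it is quoted verbatim from Chau--Tam--Yu \cite{CTY}, so there is no ``paper's own proof'' in this source to compare against. Your reconstruction follows the standard Grigor'yan two-step route that the paper visibly assumes (and that it explicitly reruns for Proposition~\ref{prop-weighted-L2-1-order-Z}): establish the un-weighted on-diagonal $L^2$ bound to identify the regular function $f(\tau)\sim V_y^0(\sqrt\tau)$, apply Lemma~\ref{lem-weighted-L2-0-order-estimate} on the exhausting Dirichlet approximations $\Z_k$ with $K$ shrinking to $\{y\}$, and pass to the limit by Fatou; this is the right approach and the constant bookkeeping you give is consistent with the statement.

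One small correction in the duality step: the operator $\partial_t-\Delta+q$ is not formally self-adjoint with respect to the evolving measure $dV_t\,dt$; since $\partial_t\,dV_t=H\,dV_t$, the adjoint picks up a zeroth-order correction, so after time reversal the new potential is $\bar q=q-H$ (evaluated at the reversed time), not $\bar q=q$. This does not affect the argument, because $H=\operatorname{tr}_{g}h$ is uniformly bounded by (A1), so $\bar q$ still satisfies (A3) and the same constants appear — but as written your claim about the conjugate data is inaccurate and should be fixed before asserting that the second inequality follows ``with the claimed constants.'' Beyond that, your identification of the main difficulty (the uniform-in-$t$ Faber--Krahn input and the convergence of the Dirichlet approximations) is exactly right.
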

In this section, we get some integral estimates of $\nabla\mathcal{Z}$ and $\Delta \mathcal{Z}$.

\begin{corollary}\label{cor-weighted-Lp-0-order-Z}
For any $p\in(0,2]$, there is a positive constants $C$ depending
only on $T,n,p$ the lower bound of the Ricci curvature of the
initial metric and the  upper bounds of $|q|$ and $\|h\|$, and a positive constant $D$
depending only on $p,T$ and the   upper bound of $\|h\|$, such that
\begin{eqnarray*}
\int_{M}\mathcal{Z}^p(x,t;y,s)e^{\frac{r^2_{0}(x,y)}{D(t-s)}}dV_t(x)&\leq&\frac{C}{[V_y^{0}(\sqrt{t-s})]^{p-1}}\
\ \mbox{and}\\
\int_{M}\mathcal{Z}^p(x,t;y,s)e^{\frac{r^2_{0}(x,y)}{D(t-s)}}dV_s(y)&\leq&\frac{C}{[V_x^{0}(\sqrt{t-s})]^{p-1}}.
\end{eqnarray*}
\end{corollary}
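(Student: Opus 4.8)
The plan is to deduce the $L^p$ estimate from the $L^2$ estimate of Proposition~\ref{prop-weighted-L2-Z} by interpolation together with a Gaussian volume-growth bound. First I would fix $p\in(0,2]$ and choose an auxiliary constant $D'$ (to be compared with the $D$ of Proposition~\ref{prop-weighted-L2-Z}) and write, for each $x$,
\begin{equation*}
\mathcal{Z}^p(x,t;y,s)e^{\frac{r_0^2(x,y)}{D'(t-s)}}
=\Big(\mathcal{Z}^2(x,t;y,s)e^{\frac{r_0^2(x,y)}{D(t-s)}}\Big)^{p/2}\cdot e^{\frac{r_0^2(x,y)}{D'(t-s)}-\frac{p}{2}\cdot\frac{r_0^2(x,y)}{D(t-s)}}.
\end{equation*}
Applying H\"older's inequality with exponents $2/p$ and $2/(2-p)$ in the variable $x$ (with respect to $dV_t$; the case $p=2$ is Proposition~\ref{prop-weighted-L2-Z} itself, so assume $p<2$), the first factor contributes $\big(\int_M \mathcal{Z}^2 e^{r_0^2/D(t-s)}dV_t(x)\big)^{p/2}\le \big(C/V_y^0(\sqrt{t-s})\big)^{p/2}$ by Proposition~\ref{prop-weighted-L2-Z}, and the problem is reduced to bounding the ``Gaussian tail'' integral
\begin{equation*}
\int_M \exp\Big(\frac{2}{2-p}\Big(\frac{1}{D'}-\frac{p}{2D}\Big)\frac{r_0^2(x,y)}{t-s}\Big)dV_t(x).
\end{equation*}

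Next I would choose $D'$ large enough (depending on $p$, $D$) that the coefficient $a:=\frac{2}{2-p}\big(\frac{1}{D'}-\frac{p}{2D}\big)$ is a fixed negative number; then this is a genuinely decaying Gaussian integral. To estimate it I would pass from the time-$t$ metric $g(t)$ to the initial metric $g(0)$: by assumption \textbf{(A1)} the volume forms satisfy $dV_t\le e^{Ct}\,dV_0\le e^{CT}\,dV_0$ on $[0,T]$ (integrating $\partial_t dV_t=H\,dV_t$ and using $|H|\le$ const), and the Ricci curvature of $g(0)$ is bounded below, so by the Bishop--Gromov inequality $V_y^0(\rho)\le V_y^0(\sqrt{t-s})\,e^{C(1+\rho/\sqrt{t-s})}$ for $\rho\ge\sqrt{t-s}$, i.e. the $g(0)$-volume of the ball $B_0(y,\rho)$ grows at most exponentially in $\rho/\sqrt{t-s}$ relative to $V_y^0(\sqrt{t-s})$. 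Slicing $M$ into the annuli $\{k\sqrt{t-s}\le r_0(x,y)<(k+1)\sqrt{t-s}\}$, $k\ge 0$, and summing $\sum_k e^{ak^2}\cdot V_y^0((k+1)\sqrt{t-s})\le V_y^0(\sqrt{t-s})\sum_k e^{ak^2}e^{C(k+2)}$, the series converges (since $a<0$) to a constant depending only on the above data, giving
\begin{equation*}
\int_M \exp\Big(a\,\frac{r_0^2(x,y)}{t-s}\Big)dV_t(x)\le C\,V_y^0(\sqrt{t-s}).
\end{equation*}
Raising this to the power $(2-p)/2$ and multiplying by the first factor yields $\int_M \mathcal{Z}^p e^{r_0^2/D'(t-s)}dV_t(x)\le C\,V_y^0(\sqrt{t-s})^{-p/2}\cdot V_y^0(\sqrt{t-s})^{(2-p)/2}=C\,V_y^0(\sqrt{t-s})^{1-p}$, which after renaming $D'$ to $D$ is exactly the claimed first inequality. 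The second inequality is proved identically, using the second bound in Proposition~\ref{prop-weighted-L2-Z} and the same volume-growth argument in the variable $y$ with respect to $dV_s$.

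The main obstacle is the control of the Gaussian integral by the volume term, i.e. making the factors of $V_y^0(\sqrt{t-s})$ match up so that the final exponent is exactly $1-p$; this is where the precise choice of $D'$ (so that $a<0$ is uniform) and the uniform exponential volume comparison from \textbf{(A1)} and the Ricci lower bound of $g(0)$ are essential, and one must check that all constants depend only on the quantities listed in the statement ($T,n,p$, the Ricci lower bound of $g(0)$, and the upper bounds of $|q|$ and $\|h\|$) and not on $\Omega$, $y$, or $t-s$. Everything else is routine: H\"older's inequality, the dyadic annular decomposition, and convergence of $\sum_k e^{ak^2+Ck}$.
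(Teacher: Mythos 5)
Your proof is correct and uses essentially the same ingredients and idea as the paper's: the $L^2$ bound of Proposition~\ref{prop-weighted-L2-Z}, H\"older's inequality with exponents $2/p$ and $2/(2-p)$, a decomposition into annuli, and Bishop--Gromov volume growth from the Ricci lower bound of $g(0)$ together with the equivalence of $dV_t$ and $dV_0$ coming from \textbf{(A1)}. The only difference is one of bookkeeping: the paper applies H\"older on each dyadic annulus $B_y^0(2^kR)\setminus B_y^0(2^{k-1}R)$ and absorbs the leftover Gaussian weight into the $L^2$ bound there, whereas you apply H\"older once over all of $M$ and reserve the annular decomposition for bounding the residual Gaussian integral by $C\,V_y^0(\sqrt{t-s})$; both routes give the same power-count $V_y^0(\sqrt{t-s})^{-p/2}\cdot V_y^0(\sqrt{t-s})^{(2-p)/2}=V_y^0(\sqrt{t-s})^{1-p}$, and the minor imprecision in your stated Bishop--Gromov bound (which omits the polynomial doubling factor $(\rho/\sqrt{t-s})^n$) is harmless since that factor can be absorbed into the exponential and the Gaussian dominates the sum in any case.
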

\begin{proof} Let $C_1,D_1>0$ be such that
\begin{equation*}
\int_M\Z^2(x,t,y;s)e^{\frac{r_0^{2}(x,y)}{D_1(t-s)}}dV_0(x)\leq \frac{C_1}{V_y^{0}(\sqrt{t-s})}.
\end{equation*}
Let $D=\frac{4D_1}{p}$ and $R=\sqrt{t-s}$. Then
\begin{eqnarray*}
& &\int_{B_y^{0}(2^{k}R)\setminus B_y^{0}(2^{k-1}R)}\mathcal{Z}^p(x,t;y,s)e^{\frac{r^2_{0}(x,y)}{D(t-s)}}dV_t(x)\\
&\leq& C_2\int_{B_y^{0}(2^kR)\setminus B_y^{0}(2^{k-1}R)}\mathcal{Z}^p(x,t;y,s)e^{\frac{r^2_{0}(x,y)}{D(t-s)}}dV_0(x)\\
&\leq&C_2\big(V_y^{0}(2^kR)-V_y^{0}(2^{k-1}R)\big)^{1-\frac{p}{2}}\Big(\int_{B_y^{0}(2^{k}R)\setminus B_y^{0}(2^{k-1}R)}\mathcal{Z}^2(x,t;y,s)e^{\frac{r^2_{0}(x,y)}{D p(t-s)/2}}dV_0(x)\Big)^{\frac{p}{2}}\\
&=&C_2\big(V_y^{0}(2^kR)-V_y^{0}(2^{k-1}R)\big)^{1-\frac{p}{2}}\Big(\int_{B_y^{0}(2^kR)\setminus B_y^{0}(2^{k-1}R)}\mathcal{Z}^2(x,t;y,s)e^{\frac{r^2_{0}(x,y)}{2D_1(t-s)}}d V_0(x)\Big)^{\frac{p}{2}}\\
&\leq&\frac{C_3}{[V_y^{0}(\sqrt{t-s})]^\frac{p}{2}}\times\big(V_y^{0}(2^kR)-V_y^{0}(2^{k-1}R)\big)^{1-\frac{p}{2}}e^{-\frac{4^{k-2}pR^2}{D_1(t-s)}}\\
&\leq&\frac{C_3[V_y^{0}(2^kR)]^{1-\frac{p}{2}}}{[V_y^{0}(\sqrt{t-s})]^\frac{p}{2}}e^{-\frac{4^{k-2}pR^2}{D_1(t-s)}}\leq\frac{C_3}{[V_y^{0}(\sqrt{t-s})]^{p-1}}\times 2^{nk}e^{C_42^kR-\frac{4^{k}R^2}{D_2(t-s)}}\\
&=&\frac{C_3}{[V_y^{0}(\sqrt{t-s})]^{p-1}}\times
e^{C_52^k-\frac{4^k}{D_2}},
\end{eqnarray*}
where $C_2$ depends on the equivalent constant of the family $g(t)$, $C_3$ depends on $C_1,p$ and $C_2$, $C_4$
depends on the lower bound of the $Rc^0$ and $n$, $C_5$ depends on $C_4,T$ and $n$, and $D_2$ depends on $p$ and $D_1$.

The same argument using H\"older inequality give us
\begin{equation*}
\int_{B_y^{0}(R)}\Z^p(x,t;y,s)e^{\frac{r_0^{2}(x,y)}{Dt}}dV_t(x)\leq \frac{C_6}{[V_y^{0}(\sqrt{t-s})]^{p-1}}
\end{equation*}
where $C_6$ depends on $C_1$ and $p$.

Summing the above inequalities together, we get the first inequality. The proof of the second
one is similar.
\end{proof}
\begin{proposition}\label{prop-weighted-L2-1-order-Z}
There is a positive constants $C$ depending
only on $T,n,$ the lower bound of the Ricci curvature of the
initial metric and the  upper bounds of $|q|$ and $\|h\|$, and a positive constant $D$
depending only on $T$ and the   upper bound of $\|h\|$, such that
\begin{equation*}
\int_{M}\|\nabla^{t}_x\mathcal{Z}(x,t;y,s)\|^2e^\frac{r^2_{0}(x,y)}{D(t-s)}dV_t(x)\leq
\frac{C}{(t-s)V_y^{0}(\sqrt{t-s})}\ \ \mbox{and}
\end{equation*}
\begin{equation*}
\int_{M}\|\nabla^{s}_y\mathcal{Z}(x,t;y,s)\|^2e^\frac{r^2_{0}(x,y)}{D(t-s)}dV_s(y)\leq
\frac{C}{(t-s)V_x^{0}(\sqrt{t-s})}
\end{equation*}
for any $0\leq s<t\leq T$.
\end{proposition}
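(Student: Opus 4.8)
The plan is to run, for $\|\nabla\Z\|$, the same exhaustion-and-mollification scheme by which Proposition \ref{prop-weighted-L2-Z} was obtained in \cite{CTY}, but feeding in Lemma \ref{lemma-local-wighted-L2-1-order-estimate} in place of Lemma \ref{lem-weighted-L2-0-order-estimate}. Fix $(y,s)$ with $s<T$ and an exhaustion $\Om_1\subset\subset\Om_2\subset\subset\cdots$ of $M$ by relatively compact domains with smooth boundary. For small $\epsilon>0$ choose $\phi_\epsilon\geq 0$ smooth with $\supp\phi_\epsilon\subset B_y^0(\epsilon)=:K$ and $\int_M\phi_\epsilon\,dV_s=1$, and set
\begin{equation*}
u_{k,\epsilon}(x,\tau)=\int_{\Om_k}\Z_k(x,s+\tau;z,s)\,\phi_\epsilon(z)\,dV_s(z),\qquad \tau\in[0,T-s].
\end{equation*}
Then $u_{k,\epsilon}$ solves the Dirichlet problem (\ref{eqn-dirichlet}) on $\Om_k\times[0,T-s]$ for the time-translated family $\{g(s+\tau)\}$, which still satisfies (A1)--(A3) with the same constants, and $u_{k,\epsilon}(\cdot,0)=\phi_\epsilon$ is supported in $K$.

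Next I would supply the hypothesis of Lemma \ref{lemma-local-wighted-L2-1-order-estimate}. By Jensen's inequality and the analogue of Proposition \ref{prop-weighted-L2-Z} for $\Z_k$ (with constants uniform in $k$, as in \cite{CTY}, after discarding the exponential weight, which is $\geq 1$),
\begin{equation*}
\int_{\Om_k}u_{k,\epsilon}^2\,dV_{s+\tau}\leq\sup_{z\in K}\int_M\Z_k^2(x,s+\tau;z,s)\,dV_{s+\tau}(x)\leq\sup_{z\in K}\frac{C}{V_z^0(\sqrt\tau)}\leq\frac{1}{f(\tau)},
\end{equation*}
where, by volume comparison for $g(0)$, $f(\tau)=c\,V_y^0(\sqrt\tau)$ works with $c$ independent of $k$ and of small $\epsilon$, and $f$ is a regular function whose constants $\gamma,A$ depend only on $n,T$ and the lower bound of $\Rc^0$. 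Lemma \ref{lemma-local-wighted-L2-1-order-estimate} then gives
\begin{equation*}
\int_{\Om_k}\|\nabla u_{k,\epsilon}\|^2e^{\frac{r_s^2(x,K)}{D\tau}}\,dV_{s+\tau}(x)\leq\frac{4Ae^{C\tau}}{\int_0^\tau f(\sigma/\gamma)\,d\sigma}.
\end{equation*}
Because $\sigma\mapsto f(\sigma/\gamma)$ is increasing, $\int_0^\tau f(\sigma/\gamma)\,d\sigma\geq\frac{\tau}{2}f(\tau/2\gamma)\geq c'\tau\,V_y^0(\sqrt\tau)$ by volume doubling on radii up to $\sqrt T$, so the right-hand side is at most $C'/(\tau V_y^0(\sqrt\tau))$ for $\tau\in(0,T]$.

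It remains to pass to the limit. Letting $\epsilon\to0$, interior parabolic estimates give $u_{k,\epsilon}\to\Z_k(\cdot,\cdot\,;y,s)$ in $C^1_{loc}(\Om_k\times(0,T-s])$; letting $k\to\infty$, $\Z_k$ and its spatial derivatives converge locally to those of $\Z$. Applying Fatou's lemma, sending $r_s(x,K)\to r_s(x,y)$, and finally replacing $r_s$ by $r_0$ using the uniform equivalence $g(s)\asymp g(0)$ on $[0,T]$ coming from (A1) (which enlarges $D$ but keeps it dependent only on $T$ and $\sup\|h\|$), one obtains the first inequality with $\tau=t-s$. The second inequality follows by the same argument carried out in the $(y,s)$ variables for the conjugate equation, of which $\Z(x,t;y,s)$ is a fundamental solution (as recorded in \cite{CTY}); the time-reversed family again meets hypotheses of type (A1)--(A3), so Lemma \ref{lemma-local-wighted-L2-1-order-estimate} applies there too. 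The one genuinely delicate point is the passage from the denominator $\int_0^\tau f(\sigma/\gamma)\,d\sigma$ to the sharp factor $(t-s)V_y^0(\sqrt{t-s})$: this rests on the volume-doubling estimate (with exponential correction, from $\Rc^0$ bounded below and Bishop--Gromov, valid for radii up to $\sqrt T$), which is also what makes $V_y^0(\sqrt{\,\cdot\,})$ a regular function; everything else is the limiting bookkeeping already performed for Proposition \ref{prop-weighted-L2-Z}.
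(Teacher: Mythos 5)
Your argument is essentially the paper's: apply the local weighted $L^2$-gradient estimate (Lemma \ref{lemma-local-wighted-L2-1-order-estimate}) to the Dirichlet fundamental solutions $\mathcal{Z}_k$ (after the usual mollification of the $\delta$-initial data, which the paper suppresses), convert the resulting denominator $\int_0^\tau f(\sigma/\gamma)\,d\sigma$ into $\tau V_y^0(\sqrt\tau)$ by a volume-comparison bound coming from $\mathrm{Rc}^0$ bounded below, and pass to the limit by Fatou. The one cosmetic difference is in that volume step: the paper estimates $\int_0^t V_y^0(\sqrt\sigma)\,d\sigma\ge c\,t\,V_y^0(\sqrt t)$ directly from Bishop--Gromov, whereas you use monotonicity of $V_y^0(\sqrt{\cdot})$ plus volume doubling on bounded radii -- both are fine and rest on the same curvature hypothesis; you also correctly flag that the second inequality requires running the argument for the conjugate equation in $(y,s)$, where the relevant potential becomes $q+H$, still bounded by (A1) and (A3).
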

\begin{proof}
We only prove the first inequality, the proof of
the second one is similar.

Note that
\begin{equation*}
\begin{split}
\int_0^{t}V_y^{0}(\sqrt{s})ds=&V_y^{0}(\sqrt t)\int_0^{t}\frac{V_y^{0}(\sqrt s)}{V_y^{0}(\sqrt t)}ds\\
\geq& V_y^{0}(\sqrt t)\int_0^{t} \frac{s^n}{t^n}e^{-C_1\sqrt t}ds\\
=&e^{-C_1T}tV_y^{0}(\sqrt t)\int_0^{1}x^ndx\\
=&c_2tV_y^{0}(\sqrt t).
\end{split}
\end{equation*}

By Lemma \ref{lem-weighted-L2-0-order-estimate}, and Lemma \ref{lemma-local-wighted-L2-1-order-estimate}, there
is some $D_1>0$ and $C_3>0$, such that
\begin{equation*}
\int_{\Omega_k}\|\nabla^{t}_x\mathcal{Z}_k(x,t;y,s)\|^2e^\frac{r^2_{0}(x,y)}{D_1(t-s)}dV_t(x)\leq\frac{C_3}{(t-s)V_y^{0}(\sqrt{t-s})}
\end{equation*}
for any $k$ and $0\leq s<t\leq T$.

By Fatou's lemma,
\begin{eqnarray*}
&&\int_{M}\|\nabla^{t}_x\mathcal{Z}(x,t;y,s)\|^2e^\frac{r^2_{0}(x,y)}{D_1(t-s)}dV_t(x)\\
&\leq &\liminf_{k\to\infty}\int_{\Omega_k}\|\nabla^{t}_x\mathcal{Z}_k(x,t;y,s)\|^2e^\frac{r^2_{0}(x,y)}{D_1(t-s)}dV_t(x)\\
&\leq&\frac{C_3}{(t-s)V_y^{0}(\sqrt{t-s})}.
\end{eqnarray*}

\end{proof}
By the same arguments as in the proof of Corollary \ref{cor-weighted-Lp-0-order-Z}, we have the following
corollary.
\begin{corollary}\label{cor-weighted-Lp-1-order-Z}
For any $p\in(0,2]$, there is a positive constants $C$ depending
only on $T,n,p$ the lower bound of the Ricci curvature of the
initial metric and the  upper bounds of $|q|$ and $\|h\|$, and a positive constant $D$
depending only on $p,T$ and the   upper bound of $\|h\|$, such that
\begin{eqnarray*}
\int_{M}\|\nabla^t_{x}\mathcal{Z}(x,t;y,s)\|^pe^{\frac{r^2_{0}(x,y)}{D(t-s)}}dV_t(x)&\leq&\frac{C }{(t-s)^\frac{p}{2}[V_y^{0}(\sqrt{t-s})]^{p-1}}\ \ \mbox{and}\\
\int_{M}|\nabla^s_{y}\mathcal{Z}(x,t;y,s)\|^pe^{\frac{r^2_{0}(x,y)}{D(t-s)}}dV_s(y)&\leq&\frac{C }{(t-s)^\frac{p}{2}[V_x^{0}(\sqrt{t-s})]^{p-1}}
\end{eqnarray*}
for any $0\leq s<t\leq T$.
\end{corollary}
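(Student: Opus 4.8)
The plan is to repeat the dyadic--annulus argument from the proof of Corollary~\ref{cor-weighted-Lp-0-order-Z} essentially verbatim, the only change being that the weighted $L^2$-bound for $\Z$ coming from Proposition~\ref{prop-weighted-L2-Z} is replaced by the weighted $L^2$-bound for $\nabla^t_x\Z$ coming from Proposition~\ref{prop-weighted-L2-1-order-Z}. Fix $p\in(0,2]$. By Proposition~\ref{prop-weighted-L2-1-order-Z} together with the uniform equivalence of the family $\{g(t)\}$ (which follows from the bound on $\|h\|$ in (A1) and which lets us pass between $dV_0$ and $dV_t$ at the cost of a bounded multiplicative factor), there are constants $C_1,D_1>0$ with
\[
\int_M\|\nabla^t_x\Z(x,t;y,s)\|^2e^{\frac{r_0^2(x,y)}{D_1(t-s)}}\,dV_0(x)\leq\frac{C_1}{(t-s)V_y^0(\sqrt{t-s})}
\]
for all $0\leq s<t\leq T$. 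I would then set $D=\frac{4D_1}{p}$ and $R=\sqrt{t-s}$, exactly as in the earlier proof.

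Next I would write $M=B_y^0(R)\cup\bigcup_{k\geq1}\bigl(B_y^0(2^kR)\setminus B_y^0(2^{k-1}R)\bigr)$ and estimate each piece by H\"older's inequality with conjugate exponents $2/p$ and $2/(2-p)$. On the $k$-th annulus one has $\bigl(e^{r_0^2/(D(t-s))}\bigr)^{2/p}=e^{r_0^2/(2D_1(t-s))}$ and $r_0\geq 2^{k-1}R$, so this factor splits into $e^{r_0^2/(D_1(t-s))}$ times a Gaussian gain $e^{-c4^kR^2/(t-s)}=e^{-c4^k}$; the remaining integral of $\|\nabla^t_x\Z\|^2e^{r_0^2/(D_1(t-s))}$ is bounded by $C_1/((t-s)V_y^0(R))$, while the measure factor $\bigl(V_y^0(2^kR)-V_y^0(2^{k-1}R)\bigr)^{1-p/2}\leq\bigl(V_y^0(2^kR)\bigr)^{1-p/2}$ is controlled, through Bishop--Gromov volume comparison for the initial metric $g(0)$ (using its lower Ricci bound), by $\bigl(V_y^0(R)\bigr)^{1-p/2}\,2^{nk}e^{c2^kR}$. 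Combining these and using $R^2=t-s\leq T$ gives, for each $k\geq1$,
\[
\int_{B_y^0(2^kR)\setminus B_y^0(2^{k-1}R)}\|\nabla^t_x\Z\|^pe^{\frac{r_0^2(x,y)}{D(t-s)}}\,dV_t(x)\leq\frac{C_2}{(t-s)^{p/2}\,[V_y^0(\sqrt{t-s})]^{p-1}}\,e^{C_3 2^k-c4^k},
\]
and the central ball $B_y^0(R)$, handled by the same H\"older step but without the Gaussian gain, contributes at most $C_4(t-s)^{-p/2}[V_y^0(\sqrt{t-s})]^{1-p}$.

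Summing over $k\geq1$, the series $\sum_{k\geq1}e^{C_3 2^k-c4^k}$ converges because the term $4^k$ eventually dominates $2^k$, so collecting all the pieces yields the first inequality; the second inequality follows by the symmetric argument, using $\nabla^s_y\Z$ and the second estimate in Proposition~\ref{prop-weighted-L2-1-order-Z}. I expect the only step requiring genuine care to be the bookkeeping of constants: one must verify that $D=4D_1/p$, the Gaussian decay rate $c$, and the volume-comparison constants can all be chosen uniformly for $0\leq s<t\leq T$, and that the exponential volume growth $2^{nk}e^{c2^kR}$ is indeed beaten by $e^{-c'4^k}$ uniformly in $R\leq\sqrt T$ --- which it is, since the weight decays quadratically in the radius while the volume grows at most exponentially. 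No analytic ingredient beyond Proposition~\ref{prop-weighted-L2-1-order-Z}, volume comparison, and the metric equivalence enters; the extra factor $(t-s)^{-1}$ present in Proposition~\ref{prop-weighted-L2-1-order-Z} simply rides through the argument and emerges as $(t-s)^{-p/2}$ after the $p/2$-th power in H\"older's inequality.
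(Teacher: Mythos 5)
Your proposal is correct and matches the paper's approach exactly: the paper proves this corollary by a one-line appeal to "the same arguments as in the proof of Corollary~\ref{cor-weighted-Lp-0-order-Z}," and you have spelled out precisely that dyadic-annulus/H\"older argument with the weighted $L^2$ bound from Proposition~\ref{prop-weighted-L2-1-order-Z} substituted for that of Proposition~\ref{prop-weighted-L2-Z}. Your observation that the extra $(t-s)^{-1}$ simply becomes $(t-s)^{-p/2}$ after the $p/2$-power in H\"older is the only new bookkeeping needed, and your handling of the Gaussian gain $e^{-c4^k}$ versus the volume growth $2^{nk}e^{c'2^kR}$ is sound.
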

By the same arguments as in the proof of Proposition \ref{prop-weighted-L2-Z} and Corollary \ref{cor-weighted-Lp-0-order-Z}
using Lemma \ref{lemma-local-weighted-L2-estimate-2-order}. We have the following integral estimate of $\Delta \mathcal Z$.
\begin{proposition}\label{prop-weighted-Lp-2-order-Z}
For any $p\in(0,2]$, there is a positive constants $C$ depending
only on $T,n,p$ the lower bound of the Ricci curvature of the
initial metric and the  upper bounds of $|q|,\|\nabla q\|$ and $\|h\|$, and a positive constant $D$
depending only on $p,T$ and the   upper bound of $\|h\|$, such that
\begin{eqnarray*}
\int_{M}|\Delta^t_{x}\mathcal{Z}(x,t;y,s)|^pe^{\frac{r^2_{0}(x,y)}{D(t-s)}}dV_t(x)&\leq&\frac{C }{(t-s)^p[V_y^{0}(\sqrt{t-s})]^{p-1}}\ \ \mbox{and}\\
\int_{M}|\Delta^s_{y}\mathcal{Z}(x,t;y,s)|^pe^{\frac{r^2_{0}(x,y)}{D(t-s)}}dV_s(y)&\leq&\frac{C }{(t-s)^p[V_x^{0}(\sqrt{t-s})]^{p-1}}
\end{eqnarray*}
for any $0\leq s<t\leq T$.
\end{proposition}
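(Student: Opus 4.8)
The plan is to retrace, one derivative higher, the passage from Lemma \ref{lem-weighted-L2-0-order-estimate} to Proposition \ref{prop-weighted-L2-Z} and then to Corollary \ref{cor-weighted-Lp-0-order-Z}, this time feeding in Lemma \ref{lemma-local-weighted-L2-estimate-2-order}; the only structural change is that the single primitive $\int_0^t f(s/\gamma)\,ds$ of the first-order case is replaced by the iterated primitive $\int_0^t\int_0^\sigma f(s/\gamma)\,ds\,d\sigma$ occurring there.

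First I would establish the weighted $L^2$-estimate
\[
\int_M|\Delta_x^t\Z(x,t;y,s)|^2e^{\frac{r_0^2(x,y)}{D_1(t-s)}}dV_t(x)\le\frac{C_1}{(t-s)^2V_y^0(\sqrt{t-s})}
\]
and its $y$-analogue. As in \cite{CTY}, $\Z$ is the increasing limit of the Dirichlet fundamental solutions $\Z_k(\cdot,\cdot;y,s)$ on an exhaustion $\Omega_k\uparrow M$; translating time so that $s=0$ and mollifying the initial data (replacing $\Z_k(\cdot,0;y,0)=\delta_y$ by $\Z_k(\cdot,\epsilon;y,0)$, which is smooth, supported in a fixed compact subset of $\Omega_k$, vanishes together with its Laplacian on $\partial\Omega_k$, and obeys $\int_{\Omega_k}\Z_k^2(x,t;y,0)\,dV_t(x)\le C/V_y^0(\sqrt t)$), Lemma \ref{lemma-local-weighted-L2-estimate-2-order} applies with the regular function $f(\tau)=cV_y^0(\sqrt\tau)$, giving a bound with denominator $\int_0^t\int_0^\sigma f(s/\gamma)\,ds\,d\sigma$; letting $\epsilon\to0$ leaves this intact. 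It then remains to bound that denominator from below: by the Ricci lower bound for $g(0)$ and Bishop--Gromov one has $V_y^0(\sqrt s)/V_y^0(\sqrt\tau)\ge(s/\tau)^ne^{-C_2\sqrt\tau}$ for $0\le s\le\tau\le T$, whence $\int_0^\tau f(s/\gamma)\,ds\ge c_3\tau V_y^0(\sqrt\tau)$ and, iterating, $\int_0^t\int_0^\sigma f(s/\gamma)\,ds\,d\sigma\ge c_4t^2V_y^0(\sqrt t)$. This yields the displayed $L^2$-estimate for each $\Z_k$ uniformly in $k$; since the interior parabolic estimates furnished by (A2)--(A3) give $\Z_k\to\Z$ in $C^{2,1}_{loc}$, Fatou's lemma transfers it to $\Z$.

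Second, I would upgrade to $p\in(0,2]$ by the annular decomposition of Corollary \ref{cor-weighted-Lp-0-order-Z}. With $R=\sqrt{t-s}$ and $D=4D_1/p$, H\"older's inequality with exponents $2/p$ and $2/(2-p)$ bounds $\int_{B_y^0(2^kR)\setminus B_y^0(2^{k-1}R)}|\Delta_x^t\Z|^pe^{\frac{r_0^2(x,y)}{D(t-s)}}dV_t$ by $[V_y^0(2^kR)]^{1-p/2}$ times the $(p/2)$-th power of $\int|\Delta_x^t\Z|^2e^{\frac{r_0^2(x,y)}{2D_1(t-s)}}dV_t$; on this annulus $r_0\ge2^{k-1}R$, so the weight $e^{r_0^2/(2D_1(t-s))}$ factors as $e^{r_0^2/(D_1(t-s))}$ times $e^{-c4^k}$, and the first step together with $[V_y^0(2^kR)/V_y^0(R)]^{1-p/2}\le2^{nk(1-p/2)}e^{C2^kR}$ (Bishop--Gromov) bounds the annular piece by
\[
\frac{C_5}{(t-s)^p[V_y^0(\sqrt{t-s})]^{p-1}}\,2^{nk}e^{C_62^kR-c4^k}.
\]
Summing over $k\ge1$ and adding $B_y^0(R)$, treated in the same way, gives the first inequality; the second follows identically from the $y$-version of the $L^2$-estimate, using that $\Z(x,t;\cdot,\cdot)$ solves the corresponding backward equation to which Lemma \ref{lemma-local-weighted-L2-estimate-2-order} applies after the analogous time reversal.

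The step I expect to be the main obstacle is the first one, and within it the legitimacy of applying Lemma \ref{lemma-local-weighted-L2-estimate-2-order} to $\Z_k$: one must verify that the mollified solution really satisfies the Dirichlet-type boundary conditions underlying the integrations by parts in that lemma (in particular $\Delta\Z_k=0$ on $\partial\Omega_k$), that the profile $\int_{\Omega_k}\Z_k^2\,dV_t\le1/f(t)$ holds with one $f$ independent of $k$ and $\epsilon$, and that $\Delta_x^t\Z_k\to\Delta_x^t\Z$ locally well enough to run Fatou. All of this already lives in the machinery of \cite{CTY}; once it is granted, the volume comparisons and the $L^2\to L^p$ passage are routine.
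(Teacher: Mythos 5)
Your proposal is correct and follows exactly the route the paper intends: the paper proves Proposition~\ref{prop-weighted-Lp-2-order-Z} in one line by citing ``the same arguments as in the proof of Proposition~\ref{prop-weighted-L2-Z} and Corollary~\ref{cor-weighted-Lp-0-order-Z} using Lemma~\ref{lemma-local-weighted-L2-estimate-2-order},'' and you have correctly unpacked that: apply Lemma~\ref{lemma-local-weighted-L2-estimate-2-order} to $\Z_k$, bound the double primitive $\int_0^t\int_0^\sigma f(s/\gamma)\,ds\,d\sigma\ge c\,t^2V_y^0(\sqrt t)$ by iterating the Bishop--Gromov estimate used in Proposition~\ref{prop-weighted-L2-1-order-Z}, pass to the limit by Fatou, and then run the annular H\"older decomposition of Corollary~\ref{cor-weighted-Lp-0-order-Z}. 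The technical points you flag (boundary conditions for $\Z_k$, uniformity of $f$, convergence for Fatou) are indeed handled by the \cite{CTY} machinery, just as in the gradient case.
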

\section{Gaussian upper bound of the gradient of fundamental solution}
\begin{proposition}\label{prop-upper-bound-gradient}
For any $\delta\in[0,1)$
There is a positive constant
$C$ depending only on $n,T$ and the upper bounds of $|q|, \|h\|$ and $\|Rc\|$ , such that
\begin{equation*}
\frac{\|\nabla^{t}_x\mathcal{Z}(x,t;y,s)\|^2}{\mathcal{Z}^{1+\delta}(x,t;y,s)}\leq\frac{C}{(1-\delta)(t-s)[V^{0}_y(\sqrt{t-s})]^{1-\delta}}\
\ \mbox{and}
\end{equation*}
\begin{equation*}
\frac{\|\nabla^{s}_y\mathcal{Z}(x,t;y,s)\|^2}{\mathcal{Z}^{1+\delta }(x,t;y,s)}\leq\frac{C}{(1-\delta)(t-s)[V^{0}_x(\sqrt{t-s})]^{1-\delta}}.
\end{equation*}
\end{proposition}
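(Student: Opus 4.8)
The plan is to derive a pointwise bound on $\|\nabla^t_x\Z\|^2/\Z^{1+\delta}$ by running a local parabolic (Bernstein-type) gradient estimate on an appropriate auxiliary quantity, and then converting it to the stated sharp form using the on-diagonal and weighted integral bounds already established (Proposition \ref{prop-weighted-L2-Z}, Corollary \ref{cor-weighted-Lp-1-order-Z}). The natural quantity to consider is $w = \|\nabla u\|^2/u^{1+\delta}$ applied to $u = \Z(\cdot,\cdot;y,s)$ on a parabolic cylinder; alternatively one studies $\log u$ and the Li–Yau-type gradient quantity $\|\nabla\log u\|^2$, which after the exponent shift $\delta$ becomes $\|\nabla u\|^2 u^{-1-\delta}$ up to powers of $u$. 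First I would fix $y,s$ and set $u = \Z(x,t;y,s) > 0$; the equation $u_t - \Delta u + qu = 0$ with the time-dependent metric means the commutation $[\partial_t,\Delta]$ and $\partial_t\|\nabla u\|^2$ each pick up $h$-terms, all controlled by $\|h\|,\|\nabla h\|$. I would compute the evolution inequality $(\partial_t - \Delta)w \leq (\text{lower order in } w) - (\text{good negative term})$ on a cylinder $B^0_y(\sqrt{t-s})\times(s,t]$, using assumptions (A1)–(A3) to absorb all curvature, $h$, and $q$ contributions into constants depending only on $n,T$ and the stated bounds.

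**Next**, the core of the argument is a localized maximum-principle / integral-iteration step. Because $M$ is noncompact and $\Z$ is not bounded below away from zero, I cannot apply the maximum principle globally; instead I would multiply $w$ by a cutoff $\varphi^2$ supported in the parabolic cylinder (a function of $r_0(x,y)/\sqrt{t-s}$ and of $t$), and either (i) run a Moser iteration on $w\varphi^2$ to get an $L^\infty$ bound in terms of a space-time integral of a power of $w$ over the slightly larger cylinder, or (ii) directly estimate $\sup(w\varphi^2)$ via the evolution inequality. The space-time integral $\int\int \|\nabla u\|^2 u^{-1-\delta}$ is then bounded: on the region where $u$ is comparable to its on-diagonal value one uses $u^{-1-\delta}\leq C[V^0_y(\sqrt{t-s})]^{1+\delta}$ together with the $L^2$ gradient bound of Proposition \ref{prop-weighted-L2-1-order-Z}, and on the tail region one plays the weight $e^{r_0^2/D(t-s)}$ in Corollary \ref{cor-weighted-Lp-1-order-Z} against the polynomial factor $u^{-1-\delta}$ (whose growth is itself controlled by the Gaussian \emph{upper} bound on $\Z$ from \cite{CTY}, giving a pointwise lower bound on $u^{-1}$ of the wrong sign — so here I would instead feed the Gaussian \emph{lower} bound, or more robustly integrate $\|\nabla u\|^2 u^{-1-\delta}$ by treating $u^{-\delta}$ via Young's inequality as $\epsilon u^{-1-\epsilon'} + C_\epsilon$ and using the $L^p$ estimates with $p = (1+\epsilon')$ close to $1$). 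The factor $\frac{1}{1-\delta}$ is expected to emerge from this last Young-type splitting, exactly as in classical $L^\infty$ gradient estimates for the heat kernel where the constant blows up as the exponent approaches the critical one.

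**The main obstacle** will be step two: controlling the full space-time integral of $\|\nabla u\|^2 u^{-1-\delta}$ on the cylinder, since a naive bound requires a pointwise \emph{lower} bound on $\Z$, which is only available (as a Gaussian lower bound) under extra hypotheses and is not among the cited results. The way around this is to avoid ever needing $u^{-1}$ pointwise: one keeps the quantity $w = \|\nabla u\|^2 u^{-1-\delta}$ intact through the iteration, noting that $(\partial_t-\Delta)w$ involves $w$ itself (not $u^{-1}$) plus terms like $\|\nabla u\|^4 u^{-2-\delta} = w^2 u^{\delta-1}\cdot(\ldots)$ — and here the $\delta < 1$ is used so that the bad quartic term has the right sign or can be absorbed by the Bochner term $\|\nabla^2 u\|^2 u^{-1-\delta}$ via Cauchy–Schwarz; this is the algebraic heart of the matter and is where $(1-\delta)^{-1}$ enters. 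I would organize the final estimate as: evolution inequality for $w\varphi^2$ $\Rightarrow$ (after absorbing, using $\delta<1$) a differential inequality $\frac{d}{dt}\int w\varphi^2 e^\xi \leq C\int w\varphi^2 e^\xi + (\text{forcing from integral estimates})$ $\Rightarrow$ Gronwall plus the parabolic mean-value property (Moser) to pass from the integral bound to the pointwise bound at the tip $(x,t)$, with the final division by $[V^0_y(\sqrt{t-s})]^{1-\delta}$ coming from the on-diagonal normalization. The second inequality (gradient in $y$) follows by the symmetry/duality of $\Z$ already used throughout Section 3.
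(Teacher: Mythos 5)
Your route is genuinely different from the paper's, and it has a concrete gap. The paper's proof is short and purely pointwise: fix $y,s$, shift time by $\sigma$, let $u(\cdot,\tau)=\mathcal{Z}(\cdot,\tau+\sigma+s;y,s)$, so the on-diagonal upper bound of Lemma~5.2 in \cite{CTY} gives $u\le N:=C_1/V_y^{0}(\sqrt\sigma)$ on the cylinder. It then invokes the Hamilton-type gradient estimate of Lemma~6.3 in \cite{CTY} for positive bounded solutions,
\begin{equation*}
\frac{\|\nabla u\|^2}{u^2}\le \frac{C(\log(N/u)+1)}{\tau},
\end{equation*}
multiplies by $u^{1-\delta}$, and finishes with the elementary inequalities $\log(N/u)+1\le \frac{(1-\delta)\log(N/u)+1}{1-\delta}$ and $v(\log(M/v)+1)\le M$, with $\tau=\sigma=(t-s)/2$. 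No cutoffs, no Moser iteration, no integral estimates from Section~3 are used.

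Your proposal, by contrast, runs a Bernstein/Moser scheme directly on $w=\|\nabla u\|^2 u^{-1-\delta}$, and this is where it breaks. Computing $(\partial_t-\Delta)w$ gives the negative Bochner term $-2u^{-1-\delta}\|\nabla^2 u\|^2$, the negative term $-(1+\delta)(2+\delta)u^{-3-\delta}\|\nabla u\|^4$, and the cross term $4(1+\delta)u^{-2-\delta}\nabla^2 u(\nabla u,\nabla u)$. After Cauchy--Schwarz to absorb the cross term against the full Bochner term, the quartic remainder is $+(1+\delta)\delta\, u^{-3-\delta}\|\nabla u\|^4$, which is $\ge 0$ (and \emph{vanishes only for $\delta=0$}). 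So the differential inequality $(\partial_t-\Delta)w\le -c\,w^2 u^{\delta-1}+\text{l.o.t.}$ you need for the maximum-principle or iteration step does not hold when $\delta>0$; the quartic term comes out with the wrong sign, and $\delta<1$ does not help here. This is precisely why Hamilton/Souplet--Zhang-type arguments do not work on $\|\nabla u\|^2/u^{1+\delta}$ but on quantities built from $\log u$ with an explicit $\log(N/u)$ factor (the quantity whose estimate is Lemma~6.3 of \cite{CTY}); the $\delta$-version is then obtained from that by pure calculus, with the factor $1/(1-\delta)$ coming from the algebraic manipulation rather than from a Young-type splitting.

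A secondary concern, which you yourself flag, is that your fallback via space-time integral bounds on $\int\!\!\int \|\nabla u\|^2 u^{-1-\delta}$ (as forcing for Moser) implicitly needs a lower bound on $\Z$, and the workaround you sketch (Young on $u^{-\delta}$) still leaves a negative power of $u$ that is not controlled by any estimate available in the paper. The clean fix is to abandon the direct Bernstein on $w$ and instead (i) prove or cite the log-gradient estimate $\|\nabla u\|^2/u^2\le C(\log(N/u)+1)/\tau$ for positive bounded solutions of the time-dependent equation, and (ii) derive the stated $\delta$-inequality from it as the paper does. If you insist on a self-contained Bernstein argument, the auxiliary quantity must carry the $\log(N/u)$ weight; $\|\nabla u\|^2 u^{-1-\delta}$ alone is not the right object.
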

\begin{proof}
Fixed $0\leq s<T$ and $\sigma\in(0,T-s-\sigma)$. Let
$u(x,\tau)=\mathcal{Z}(x,\tau+\sigma+s;y,s)$. The domain of $\tau$ is
$(0,T-s]$. By Lemma 5.2 in Chau-Tam-Yau \cite{CTY},
\begin{equation*}
u(x,\tau)=\mathcal{Z}(x,\tau+\sigma+s;y,s)\leq\frac{C_1}{V_y^{0}(\sqrt
{\tau+\sigma})}\leq\frac{C_1}{V_y^{0}(\sqrt\sigma)}:=N(\sigma),
\end{equation*}
for any $\tau\in[0,T-s-\sigma]$. By Lemma 6.3 in Chau-Tam-Yu \cite{CTY}(It was proved in \cite{CTY} only for Ricci flow, but we can prove it
by the same argument without any difficulty within our setting.) ,
\begin{eqnarray*}
\frac{\|\nabla^{\tau+\sigma+s} u\|^2}{u^{1+\delta}}&\leq&\frac{C_2 u^{1-\delta}(\log\frac{N}{u}+1)}{\tau}\leq \frac{C_2 u^{1-\delta}((1-\delta)\log\frac{N}{u}+1)}{(1-\delta)\tau}\\
&=&\frac{C_2 u^{1-\delta}(\log(\frac{N}{u})^{1-\delta}+1)}{(1-\delta)\tau}\leq\frac{C_2N^{1-\delta}}{(1-\delta)\tau}=\frac{C_2}{(1-\delta)\tau [V_y^{0}(\sqrt \sigma)]^{1-\delta}},
\end{eqnarray*}
where we have used the inequality $\log(1+x)\leq x$.

Hence, for any $s<t$, by
letting $\tau=\sigma=\frac{t-s}{2}$,
\begin{equation*}
\frac{\|\nabla^{t}_x\mathcal{Z}(x,t;y,s)\|^2}{\mathcal{Z}^{1+\delta}(x,t;y,s)}\leq\frac{C_3}{(1-\delta)(t-s)[V^{0}_y(\sqrt{t-s})]^{1-\delta}}.
\end{equation*}
This completes the proof of the first inequality. The proof of the second inequality is just the same.
\end{proof}
\begin{corollary}\label{cor-guassian-upper-gradient}
There is a positive constants $C$ depending only on $n,T$ and the upper bounds of $|q|, \|h\|$ and $\|Rc\|$,
and a positive constant $D$ depending only on $T$ and the upper bound of $\|h\|$, such that
\begin{eqnarray*}
\|\nabla_x^{t}\mathcal{Z}(x,t;y,s)\|&\leq&\frac{Ce^{-\frac{r^2_{0}(x,y)}{D(t-s)}}}{\big[(t-s)V_x^{0}(\sqrt{t-s})V_y^{0}(\sqrt{t-s})\big]^\frac{1}{2}},\\
\|\nabla_y^{s}\mathcal{Z}(x,t;y,s)\|&\leq&\frac{Ce^{-\frac{r^2_{0}(x,y)}{D(t-s)}}}{\big[(t-s)V_x^{0}(\sqrt{t-s})V_y^{0}(\sqrt{t-s})\big]^\frac{1}{2}},\\
\|\nabla_x^{t}\mathcal{Z}(x,t;y,s)\|&\leq&\frac{Ce^{-\frac{r^2_{0}(x,y)}{D(t-s)}}}{\sqrt{t-s}V_y^{0}(\sqrt{t-s})}\ \ \mbox{and}\\
\|\nabla_y^{s}\mathcal{Z}(x,t;y,s)\|&\leq&\frac{Ce^{-\frac{r^2_{0}(x,y)}{D(t-s)}}}{\sqrt{t-s}V_x^{0}(\sqrt{t-s})}.\\
\end{eqnarray*}
\end{corollary}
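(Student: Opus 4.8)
The plan is to read the four estimates off of Proposition~\ref{prop-upper-bound-gradient} once one has a Gaussian upper bound for $\mathcal{Z}$ itself, and then to interchange the two shapes of volume factor by comparison geometry. First I would set $\delta=0$ in Proposition~\ref{prop-upper-bound-gradient}, which gives the clean pointwise bounds
\[
\|\nabla^t_x\mathcal{Z}(x,t;y,s)\|^2\le \frac{C}{(t-s)\,V^0_y(\sqrt{t-s})}\,\mathcal{Z}(x,t;y,s),\qquad
\|\nabla^s_y\mathcal{Z}(x,t;y,s)\|^2\le \frac{C}{(t-s)\,V^0_x(\sqrt{t-s})}\,\mathcal{Z}(x,t;y,s).
\]
Into the first I would insert the Gaussian upper bound
\[
\mathcal{Z}(x,t;y,s)\le \frac{C\,e^{-r_0^2(x,y)/(D(t-s))}}{V^0_y(\sqrt{t-s})},
\]
and into the second the symmetric bound with $V^0_x$ in the denominator; this is the full form of the on-diagonal estimate of Chau--Tam--Yu~\cite{CTY} that is quoted (in weakened shape) in the proof of Proposition~\ref{prop-upper-bound-gradient}. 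Taking square roots then yields at once the third and fourth inequalities of the corollary (the square root turns the constant $D$ into $2D$, still depending only on $T$ and the bound on $\|h\|$).

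To pass from the third inequality to the first (and from the fourth to the second) I would use volume comparison for the initial metric. Since $g(0)$ has Ricci curvature bounded below, and (A2) gives a two-sided sectional bound and hence the matching lower bound $V^0_z(\rho)\ge c\,\rho^n$ for $\rho$ up to a fixed size, Bishop--Gromov comparison yields
\[
V^0_x(\rho)\le C\,e^{c\,r_0(x,y)}\,V^0_y(\rho)\qquad\text{for all }x,y\in M,\ 0<\rho\le\sqrt T,
\]
with $C,c$ depending only on $n$, $T$ and the lower Ricci bound. Taking $\rho=\sqrt{t-s}$ and writing
\[
\frac{1}{V^0_y(\sqrt{t-s})}=\frac{1}{\big[V^0_x(\sqrt{t-s})\,V^0_y(\sqrt{t-s})\big]^{1/2}}\left(\frac{V^0_x(\sqrt{t-s})}{V^0_y(\sqrt{t-s})}\right)^{1/2},
\]
the extra factor $e^{c\,r_0(x,y)/2}$ is absorbed into the Gaussian at the cost of enlarging $D$ (complete the square in $r_0(x,y)$ and use $t-s\le T$), which turns the third inequality into the first and, symmetrically, the fourth into the second.

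The step that carries the real content is therefore the Gaussian upper bound for $\mathcal{Z}$. If it is taken from \cite{CTY} there is nothing more to do. For a self-contained argument from the estimates of Section~2, the classical route is: $\mathcal{Z}(\cdot,\cdot;y,s)$ is a nonnegative subsolution of the parabolic equation under (A1)--(A3) (and since $\partial_t g$ is bounded, $g(\tau)$-volumes and distances for $\tau$ near $t_0$ stay comparable to those of $g(0)$), so the curvature and coefficient bounds give a local parabolic mean value inequality over a cylinder $B^0_{x_0}(\rho)\times[t_0-\rho^2,t_0]$ with $\rho\simeq\sqrt{t_0-s}$; feeding in Proposition~\ref{prop-weighted-L2-Z} after restricting its Gaussian weight to this cylinder (there $r_0(x,y)\ge r_0(x_0,y)-\rho$, so $(r_0(x_0,y)-\rho)^2\ge\tfrac12 r_0^2(x_0,y)-2\rho^2$, which lets the weight come out as $e^{-r_0^2(x_0,y)/(2D(t_0-s))}$ up to a bounded factor) and using $\big|B^0_{x_0}(\rho)\times[t_0-\rho^2,t_0]\big|\gtrsim (t_0-s)\,V^0_{x_0}(\sqrt{t_0-s})$ produces the symmetric Gaussian bound $\mathcal{Z}(x_0,t_0;y,s)\le C\,e^{-r_0^2(x_0,y)/(D(t_0-s))}[V^0_{x_0}(\sqrt{t_0-s})V^0_y(\sqrt{t_0-s})]^{-1/2}$. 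The same mean value machinery applied directly to $\|\nabla^\tau_x\mathcal{Z}\|^2$ --- which by the Bochner formula and (A1)--(A3) satisfies $(\partial_\tau-\Delta)\|\nabla\mathcal{Z}\|^2\le C(\|\nabla\mathcal{Z}\|^2+\mathcal{Z}^2)$ --- together with Propositions~\ref{prop-weighted-L2-Z} and~\ref{prop-weighted-L2-1-order-Z}, would produce the first inequality directly and bypass Proposition~\ref{prop-upper-bound-gradient} entirely; I would use whichever is shorter given what one is willing to quote. The only genuine obstacle is bookkeeping: matching the constants $C,D$ across the two inputs and keeping the parabolic cylinder inside $(s,T]$, which forces $\rho$ to be a small fixed multiple of $\sqrt{t-s}$.
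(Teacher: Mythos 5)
Your main route is exactly the paper's: set $\delta=0$ in Proposition~\ref{prop-upper-bound-gradient} to get $\|\nabla^t_x\mathcal{Z}\|^2\le C\,\mathcal{Z}/\big((t-s)V^0_y(\sqrt{t-s})\big)$ (and the symmetric $y$-version), insert the Gaussian upper bound for $\mathcal{Z}$ from Corollary~5.2 of \cite{CTY}, and take square roots. The only cosmetic difference is that the paper simply invokes both available forms of that Gaussian bound (with $V^0_x$, with $V^0_y$, and with $[V^0_xV^0_y]^{1/2}$ in the denominator) to read off all four inequalities at once, whereas you reconstruct one form from the other via Bishop--Gromov volume comparison --- a correct but unnecessary extra step; the self-contained parabolic mean value argument you sketch at the end is likewise a valid alternative but not what the paper does.
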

\begin{proof}
Straight forward from Corollary 5.2 in Chau-Tam-Yu \cite{CTY}  and Proposition \ref{prop-upper-bound-gradient} with $\delta=0$.
\end{proof}
\begin{proposition}\label{prop-integral-representation}
Let $f$ be a bounded smooth function on $M$ and $F(x,t)$ be a bounded smooth function on $M\times (0,T]$.
Then, the Cauchy problem
\begin{equation*}
\left\{\begin{array}{l}u_t-\Delta u+qu=F\\ u(x,0)=f(x)\end{array}\right.
\end{equation*}
has a unique bounded solution
\begin{equation*}
u(x,t)=\int_{M}\Z(x,t;y,0)f(y)dV_0(y)+\int_0^{t}\int_M\Z(x,t;y,s)F(y,s)dV_s(y)ds.
\end{equation*}
\end{proposition}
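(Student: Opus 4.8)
The plan is to isolate three ingredients and assemble them. \emph{Ingredient 1: a uniform mass bound.} Fix an exhaustion $\Omega_1\subset\subset\Omega_2\subset\subset\cdots$ with $\bigcup_k\Omega_k=M$ and smooth boundaries, and let $\Z_k$ be the Dirichlet fundamental solution on $\Omega_k$, so $\Z_k\ge0$ and $\Z_k\uparrow\Z$. For fixed $(y,s)$ the function $v_k(x,t)=\int_{\Omega_k}\Z_k(x,t;y,s)\,dV_s(y)$ solves $\partial_tv_k-\Delta v_k+qv_k=0$ in $\Omega_k$ with $v_k(\cdot,s)\equiv1$ and $v_k|_{\partial\Omega_k}=0$; comparing it with the spatially constant supersolution $e^{C_0(t-s)}$, where $C_0$ is the uniform bound on $|q|$, the maximum principle gives $0\le v_k\le e^{C_0(t-s)}$, and letting $k\to\infty$ (monotone convergence) yields $0\le\int_M\Z(x,t;y,s)\,dV_s(y)\le e^{C_0(t-s)}$ for $0\le s<t\le T$. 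This bound makes both integrals in the asserted formula absolutely convergent, gives $\|u\|_\infty\le e^{C_0T}(\|f\|_\infty+T\|F\|_\infty)$, and shows the Duhamel term $w(x,t):=\int_0^t\!\int_M\Z(x,t;y,s)F(y,s)\,dV_s(y)\,ds$ obeys $|w(x,t)|\le\|F\|_\infty(e^{C_0t}-1)/C_0\to0$ as $t\to0^+$, so $w(\cdot,0)=0$.

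\emph{Ingredient 2: uniqueness.} If $u_1,u_2$ are bounded solutions then $w=u_1-u_2$ is bounded, solves the homogeneous equation, and has zero initial value. By (A1)--(A2) the metrics $g(t)$ are uniformly equivalent and share a uniform lower Ricci bound, so $r_0(\cdot,o)$ has the usual controlled-growth Laplacian estimate for every $g(t)$; a standard barrier/exhaustion argument (as in \cite{CTY}) then forces $w\equiv0$, giving uniqueness.

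\emph{Ingredient 3: Duhamel's principle.} Write $u=v+w$ with $v(x,t)=\int_M\Z(x,t;y,0)f(y)\,dV_0(y)$. By the construction and basic properties of $\Z$ in \cite{CTY}, $v$ is a bounded solution of $v_t-\Delta v+qv=0$ with $v(\cdot,0)=f$, so it remains only to verify $w_t-\Delta w+qw=F$ on $M\times(0,T]$. This is the step I expect to be the main obstacle, because $\Z(x,t;y,s)$ and its $x$-derivatives blow up like $(t-s)^{-1/2}$ and $(t-s)^{-1}$ as $s\to t$, so one cannot simply differentiate under the integral sign. I would circumvent this by approximation: for cutoffs $\chi_k$ with $\chi_k\equiv1$ on $\Omega_{k-1}$ and $\supp\chi_k\subset\Omega_k$, classical parabolic theory on the relatively compact cylinder $\Omega_k\times[0,T]$ produces a solution $u_k$ of the Dirichlet problem with inhomogeneity $F$, boundary value $0$ and initial value $\chi_kf$, given by Duhamel's formula with $\Z_k$ in place of $\Z$; the maximum principle bounds $\|u_k\|_\infty$ uniformly in $k$. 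From $\Z_k\uparrow\Z$ and $\chi_k\to1$ one gets $u_k\to u$ pointwise on $M\times(0,T]$, and interior parabolic (Schauder) estimates give $C^\infty_{loc}$ convergence along a subsequence, so $u$ is smooth on $M\times(0,T]$ and inherits $u_t-\Delta u+qu=F$. Together with $v(\cdot,0)=f$, $w(\cdot,0)=0$ and Ingredient 2, this identifies $u$ as the unique bounded solution. (An operator-theoretic alternative would use the Chapman--Kolmogorov identity $P_{t,s}\circ P_{s,\tau}=P_{t,\tau}$ for the solution operators $(P_{t,s}\phi)(x)=\int_M\Z(x,t;y,s)\phi(y)\,dV_s(y)$, writing $w(\cdot,t)=P_{t,\tau}w(\cdot,\tau)+\int_\tau^tP_{t,s}F(\cdot,s)\,ds$ and differentiating in $t$; but the approximation route keeps all genuine analysis on the $\Omega_k$, where heat-kernel theory is classical, and uses only the uniform-in-$k$ maximum principle bound and interior estimates, both available under (A1)--(A3).)
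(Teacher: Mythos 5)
Your proof is correct, but it takes a genuinely different route from the paper's. The paper's proof is short: it invokes the Gaussian gradient estimate for $\Z$ (Corollary~\ref{cor-guassian-upper-gradient}) together with dominated convergence to justify one differentiation under the integral sign, so that $\nabla u$ is given by the expected formula; this first-order information suffices to check that $u$ satisfies the weak (integrated-by-parts) formulation of the Cauchy problem, and interior parabolic regularity then upgrades $u$ to a classical solution; uniqueness is by the maximum principle. You instead avoid differentiating under the integral altogether: you construct the bounded solution as the limit of classical solutions $u_k$ of Dirichlet problems on the exhaustion $\Omega_k$, using $\Z_k\uparrow\Z$, a uniform-in-$k$ $C^0$ bound from your mass estimate and the maximum principle, and interior Schauder estimates to extract $C^\infty_{\mathrm{loc}}$ convergence of a subsequence. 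The two approaches buy different things. The paper's argument is short and directly exploits the gradient estimate that is one of the main new results of the article, at the cost of a slightly delicate ``direct computation'' to verify the weak formulation. Your argument is longer but more robust: it never touches the $(t-s)^{-1/2}$ singularity of $\nabla_x\Z$ near the diagonal and it keeps all the analysis on relatively compact cylinders where classical parabolic theory is standard, using only the monotone approximation and the uniform maximum-principle bound. One small point worth flagging: you should note explicitly that the mass bound you derive in Ingredient~1 is also what dominates $\Z_k|\chi_kf|$ and $\Z_k|F|$ uniformly in $k$, so the pointwise convergence $u_k\to u$ that you invoke in Ingredient~3 really is a dominated-convergence statement rather than just monotone convergence (since $f,F$ need not have a sign). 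With that clarification your proof is complete and is a valid alternative to the paper's.
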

\begin{proof}
By estimates of $\Z(x,t;y,s)$, $u(x,t)$ is well defined and bounded. Moreover, by the estimate of the gradient
of $\Z(x,t;y,0)$ and Lebesgue's dominant convergence theorem,
\begin{equation}
\nabla u=\int_M\nabla_x^{t}\Z(x,t;y,0)f(y)dV_0(y)+\int_{0}^{t}\int_M\nabla_x^{t}\Z(x,t;y,s)F(y,s)dV_s(y)ds.
\end{equation}
So, a direct computation shows that $u$ is a weak solution of the Cauchy problem. By regularity
theory, it is actually a classical solution.

Uniqueness comes directly from the maximum principle.
\end{proof}
\begin{remark}
This representation formula was obtained by Guenther (\cite{Guenther}) on compact manifolds.
\end{remark}
\section{Some Asymptotic Behavior of the fundamental solution}
The following asymptotic behavior of the fundamental solution is basically the same as in Garofalo-Lanconelli \cite{Garofalo-Lanconelli}.
Since our setting is different with the setting in Garofalo-Lanconelli \cite{Garofalo-Lanconelli}, we give a detailed proof here.

\begin{proposition}\label{prop-asmptotic-1}
Let $\mathcal{Z}(x,y,t)=\mathcal{Z}(x,t;y,0)$. Then, for any
relative compact domain $\Omega$, there is positive constant $\delta$, and
a positive function $u_0\in C^{\infty}(M\times M\times [0,\delta])$ with
$u_0(x,x,t)=1$ for any  $x\in \Omega$, such that
\begin{equation*}
\Big|\mathcal{Z}(x,y,t)-\frac{1}{(4\pi
t)^{\frac{n}{2}}}e^{-\frac{r^{2}_t(x,y)}{4t}}u_0(x,y,t)\Big |\leq
C t^{1-\frac{n}{2}}
\end{equation*}
on $\Omega\times\Omega\times (0,\delta]$, for some positive constant
$C$.
\end{proposition}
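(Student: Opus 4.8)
The plan is to carry out the classical parametrix (Levi) construction, adapted to the time-dependent metric, and to close the estimate with the Gaussian upper bound for $\mathcal{Z}$ established in \cite{CTY} together with a Duhamel identity. First I would fix a relatively compact $\Omega'$ with $\Omega\subset\subset\Omega'$ and a number $\iota>0$ smaller than the injectivity radius of $g(\tau)$ on $\Omega'$ for all $\tau\in[0,\delta]$ and smaller than $\mathrm{dist}_{g(0)}(\bar\Omega,M\setminus\Omega')$ (possible for $\delta$ small, since the $g(\tau)$ vary smoothly and are uniformly equivalent by (A1)); then $r_\tau^2(x,y)$ is jointly smooth on $U=\{(x,y,\tau):x\in\Omega',\ r_0(x,y)<\iota,\ \tau\in[0,\delta]\}$. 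On $U$ I apply $L_x=\partial_t-\Delta_x^t+q$ to the ansatz $P=(4\pi t)^{-n/2}e^{-r_t^2(x,y)/4t}\,v(x,y,t)$ and sort the outcome by powers of $t$. Because $|\nabla_x^tr_t|_t\equiv1$, the most singular contribution, of order $t^{-n/2-2}$, coming from $\partial_te^{-r_t^2/4t}$ and $-\Delta_x^te^{-r_t^2/4t}$, cancels identically; the order $t^{-n/2-1}$ terms vanish precisely when $v$ solves the first-order transport equation $r_t\,\partial_rv+\bigl(\tfrac{1-n+r_t\Delta_x^tr_t}{2}-\tfrac14\partial_t(r_t^2)\bigr)v=0$ along the $g(t)$-geodesics from $y$ (here $\partial_r$ is the $g(t)$-radial derivative and $|\partial_t(r_t^2)|\le Cr_t^2$ by (A1)), with the normalization $v(x,x,t)=1$ fixing the regular solution. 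By smooth dependence of ODE solutions on parameters this has a unique solution $v\in C^\infty(U)$. I then set $u_0=\eta\,v$, extended by $0$, where $\eta\in C^\infty(M\times M)$ equals $1$ for $r_0(x,y)<\iota/2$, vanishes for $r_0(x,y)\ge\iota$, and is supported in $\{x\in\Omega'\}$; then $u_0\in C^\infty(M\times M\times[0,\delta])$ with $u_0(x,x,t)=1$ for $x\in\Omega$, and $P(x,y,t)=(4\pi t)^{-n/2}e^{-r_t^2(x,y)/4t}u_0(x,y,t)$ is the global parametrix of the statement.

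Next I would record the error $L_xP$. By the transport equation, on $\{r_0(x,y)<\iota/2\}$ one has $L_xP=(4\pi t)^{-n/2}e^{-r_t^2/4t}\,(\partial_tv-\Delta_x^tv+qv)$, and $\partial_tv-\Delta_x^tv+qv$ is bounded on $\Omega'\times\Omega'\times[0,\delta]$ since $v$ is smooth up to $t=0$; on the cutoff shell $\iota/2\le r_0(x,y)\le\iota$ the derivatives of $\eta$ produce extra terms, but they are supported where $r_t\ge c\iota$, hence are $O(t^N)$ for every $N$. Using $r_\tau\simeq r_0$ for $\tau\le\delta$ this gives, for $y\in\Omega$,
\[
|(L_zP)(z,s;y,0)|\le C\,s^{-n/2}e^{-r_0^2(z,y)/(Ds)}\ \text{for }r_0(z,y)<\iota/2,\qquad |(L_zP)(z,s;y,0)|\le C_N\,s^{N}\ \text{otherwise,}
\]
with $C,D$ depending only on $n$, the geometry of $\Omega'$ and the quantities in (A1)--(A3).

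Now set $w(x,t)=\mathcal{Z}(x,t;y,0)-P(x,t;y,0)$, a smooth solution of $L_xw=-L_xP=:-g$ on $(0,\delta]$; the aim is the Duhamel identity $w(x,t)=-\int_0^t\int_M\mathcal{Z}(x,t;z,s)\,g(z,s)\,dV_s(z)\,ds$. To obtain it I fix $\varepsilon\in(0,t)$ and apply Proposition \ref{prop-integral-representation}, translated to the interval $[\varepsilon,t]$, to $P$ — which there is a bounded smooth solution of $L_xP=g$ with bounded, compactly supported initial slice $P(\cdot,\varepsilon;y,0)$ — to get
\[
P(x,t;y,0)=\int_M\mathcal{Z}(x,t;z,\varepsilon)\,P(z,\varepsilon;y,0)\,dV_\varepsilon(z)+\int_\varepsilon^t\!\!\int_M\mathcal{Z}(x,t;z,s)\,g(z,s)\,dV_s(z)\,ds.
\]
As $\varepsilon\downarrow0$, $P(\cdot,\varepsilon;y,0)$ is a Gaussian approximation of the Dirac mass at $y$ (because $u_0(y,y,0)=1$), while for our fixed $t>0$ the map $z\mapsto\mathcal{Z}(x,t;z,\varepsilon)$ is an honest bounded continuous function converging uniformly on compacts to $\mathcal{Z}(x,t;z,0)$; hence the first term tends to $\mathcal{Z}(x,t;y,0)$ by a standard approximate-identity argument, and the second tends to $\int_0^t\int_M\mathcal{Z}\,g$ by dominated convergence (with dominating function supplied below). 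Subtracting yields the identity for $w$.

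Finally, inserting the Gaussian upper bound $\mathcal{Z}(x,t;z,s)\le C(t-s)^{-n/2}e^{-r_0^2(x,z)/(D(t-s))}$ of \cite{CTY} (with $V^0(\sqrt{t-s})\simeq(t-s)^{n/2}$ on the compact region that matters) together with the bound for $g$ and $dV_s\simeq dV_0$, the near-diagonal part of the $z$-integral is a convolution of two Gaussians,
\[
\int_M(t-s)^{-n/2}e^{-r_0^2(x,z)/(D(t-s))}\,s^{-n/2}e^{-r_0^2(z,y)/(Ds)}\,dV_0(z)\le C\,t^{-n/2}e^{-r_0^2(x,y)/(D't)},
\]
whose $s$-integral over $[0,t]$ gains one power of $t$; the off-diagonal part contributes only $O(t^{N})$ for every $N$. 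Hence $|w(x,t)|\le C\,t^{1-n/2}e^{-r_0^2(x,y)/(D't)}\le C\,t^{1-n/2}$, which is the assertion (for $x,y\in\Omega$ far apart, where $P\equiv0$, it already follows from the Gaussian bound for $\mathcal{Z}$). I expect the main obstacle to be the rigorous justification of the Duhamel identity: when $n\ge3$ neither $w$ nor the correction integral is bounded as $t\downarrow0$, so one cannot merely invoke uniqueness of bounded Cauchy solutions, and the argument must run through the $\varepsilon$-regularization above, the decisive point being that for a fixed $t>0$ the kernel $\mathcal{Z}(x,t;\cdot,\varepsilon)$ is a genuine bounded function that may legitimately be paired against the delta-approximant $P(\cdot,\varepsilon;y,0)$. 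A secondary technical point is the uniform geometric set-up — a lower bound for the injectivity radius over $\Omega'$ valid for all $t\in[0,\delta]$, joint smoothness of $r_t^2$, and smoothness of $v$ up to $t=0$ — which uses only (A1)--(A2) and the smooth dependence of $g(t)$ on $t$.
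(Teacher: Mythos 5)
Your proposal is correct and follows the same basic strategy as the paper: construct a parametrix $P=(4\pi t)^{-n/2}e^{-r_t^2/4t}u_0$ by solving the first-order transport equation that kills the $t^{-n/2-1}$ part of $\Box_x P$, cut off near the diagonal, derive a Duhamel identity $\mathcal{Z}-P=-\int_0^t\int_M\mathcal{Z}(x,t;z,s)\,\Box_z P(z,s;y,0)\,dV_s\,ds$, and estimate the right-hand side by Gaussian convolution to get $O(t^{1-n/2})$. The one place where you genuinely deviate from the paper is the justification of the Duhamel identity, and you correctly identify this as the delicate point: Proposition \ref{prop-integral-representation} requires a \emph{bounded} solution, which $\mathcal{Z}-P$ is not near $t=0$ when $n\ge 3$. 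Your remedy is to apply the representation formula to $P$ alone on the truncated interval $[\varepsilon,t]$, where $P$ is bounded, and then let $\varepsilon\downarrow 0$, using that $P(\cdot,\varepsilon;y,0)$ is an approximate identity at $y$ and that $z\mapsto\mathcal{Z}(x,t;z,\varepsilon)$ is bounded, continuous, and converges to $\mathcal{Z}(x,t;z,0)$. The paper instead integrates the difference $\mathcal{Z}-\chi Gu_0$ against an arbitrary $\varphi\in C_0^\infty(\Omega)$ in the $y$ variable; the resulting $\psi(x,t)$ is bounded on all of $M\times[0,T]$ with $\psi(\cdot,0)=0$ and bounded $\Box\psi$, so the representation formula applies without any regularization, and the identity follows since $\varphi$ was arbitrary. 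Both routes are legitimate; the paper's avoids any limiting argument and any appeal to continuity of $\mathcal{Z}$ in its second time variable, whereas yours leans on that continuity and on a dominated-convergence step for the inhomogeneous term. Your final Gaussian-convolution estimate is the same in substance as the paper's Step 3, just organized with a single semigroup-type bound rather than the paper's explicit split by $\min\{r_0(x,z),r_0(z,y)\}$ and the change of variable $\tau=(t-s)s/t$.
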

\begin{proof}
We enlarge $\Omega$ to a compact domain $\Omega'$ such that
$\Omega'\supset\supset\Omega$. Let $\delta>0$  be such that
\begin{equation*}
\frac{1}{4}g(0)\leq g(t)\leq 4g(0),
\end{equation*}
for any $t\in [0,\delta]$. Let $\epsilon>0$ be such that $B^t_y(2\epsilon)$ is a convex geodesic ball of $(M,g(t))$  for any $t\in[0,\delta]$ and $y\in\Omega'$.

For
each $t\in [0,\delta]$ and $y\in \Omega'$, Let $(r^t_{y},\theta^t_{y})$ be the polar
coordinate at $y$ of $(B^t_{y}(2\epsilon),g(t))$. Then
\begin{equation*}
\Delta=\frac{\partial^2}{\partial
r^2}+\Big(\frac{n-1}{r}+\frac{\partial \log \sqrt {\det g}(r,\theta)}{\partial
r}\Big)\frac{\partial}{\partial r}+\Delta_{S_r}.
\end{equation*}

We divide the proof into the following steps.
\begin{enumerate}
\item[Step 1.] Construction of $u_0$.

Let
\begin{equation*}
U=\{(x,y,t)\in M\times\Omega'\times [0,\delta]\ |\ r_t(x,y)\leq \epsilon \}.
\end{equation*}
Let
\begin{equation*}
G(x,y,t)=\frac{1}{(4\pi t)^\frac{n}{2}}e^\frac{-r^2_{t}(x,y)}{4t}.
\end{equation*}
Then, $G(x,y,t)$ is smooth on $\overline U$, and
\begin{equation*}
\begin{split}
&\pd{}{t}G(x,y,t)-\Delta^t_x G(x,y,t)\\
=&\Big(-\frac{r_t}{2t}\frac{\partial r_t}{\partial t}+\frac{r_t}{2t}\frac{\partial \log\sqrt{\det g(t)}(r,\theta)}{\partial
r}\Big)G.
\end{split}
\end{equation*}
Let $u_0(x,y,t)$ be a function to be determined. Let $\Box$ be the operator
\begin{equation*}
\Box=\pd{}{t}-\Delta+q.
\end{equation*}
Then
\begin{equation*}
\begin{split}
\Box_x(Gu_0)=&G\Box_x u_0+\Big(-\frac{r_t}{2t}\frac{\partial r_t}{\partial t}+\frac{r_t}{2t}\frac{\partial \log\sqrt{\det g(t)}(r,\theta)}{\partial r}\Big)Gu_0-2\vv<\nabla^t_xG,\nabla^t_x u_0>\\
=&G\Box_xu_0+\Big(\frac{r_t}{t}\frac{\partial u_0}{\partial
r}+\Big(-\frac{r_t}{2t}\frac{\partial r_t}{\partial
t}+\frac{r_t}{2t}\frac{\partial\log\sqrt{\det g(t)}(r,\theta)}{\partial
r}\Big)u_0\Big)G.
\end{split}
\end{equation*}

We require $u_0$ to be such that the coefficient of $G$ is
vanished and \break$u(y,y,0)=1$. That is to solve the ODE:
\begin{equation*}
\frac{\partial u_0}{\partial r}=\frac{1}{2}\Big(\frac{\partial
r_t}{\partial t}-\frac{\partial\log
\sqrt{\det g(t)}(r,\theta)}{\partial r}\Big)u_0
\end{equation*}
with initial data $1$. $u_0(\rho_y^{t},\theta_y^{t},y,t)=\exp\Big(\D\frac{1}{2}\int_0^{\rho}\Big[\frac{\partial
r_t}{\partial t}-\frac{\partial\log
\sqrt{\det g(t)}(r,\theta)}{\partial r}\Big]dr\Big)$ is the solution of the ODE
with the initial data. So $u_0$ is positive and smooth function on $\overline U$.
For this $u_0$, we have
\begin{equation}
\Box_x(Gu_0)=G\Box_xu_0.
\end{equation}

\item[Step 2.]
Let $\zeta$ be a smooth function $M$ such that $\zeta\equiv 1$ on $\Omega$
and $\zeta\equiv 0$ on $M\setminus \Omega''$, where $\Omega\subset\subset\Omega''\subset\subset\Omega'$.
Let $\eta$ be a smooth function on $\R$, such that
$\eta\equiv 1$ on $[0,1/3]$ and $\eta\equiv 0$ on $[2/3,\infty)$. Let
\begin{equation*}
\chi(x,y)=\eta(r_0(x,y)/\epsilon)\zeta(y).
\end{equation*}
Then $\chi\in C_0^\infty(M\times M)$ and $\chi\equiv 1$ on $V$ where
\begin{equation*}
V=\{(x,y)\in M\times \Omega|\ r_0(x,y)\leq \epsilon/3\}.
\end{equation*}
It is clear that $V\times [0,\delta]\subset U$ and $\supp \chi\subset\subset U$.
So $\chi u_0\in C_0^{\infty}(M\times M)$.

We want to show that
\begin{equation}\label{eqn-integral-identity}
\mathcal{Z}(x,y,t)-\chi G u_0(x,y,t)=-\int_0^{t}\int_M
\mathcal{Z}(x,t;z,s)\Box_z(\chi G u_0)(z,y,s)dV_s(z)ds
\end{equation}
for any $(x,y,t)\in M\times \Omega\times(0,\delta]$.

For any $\varphi\in C_0^\infty(\Omega)) $, consider
the function
\begin{equation*}
\psi(x,t)=\int_{M}(\mathcal{Z}(x,y,t)-(\chi G u_0)(x,y,t))\varphi(y)
dV_0(y).
\end{equation*}
We have
\begin{eqnarray*}
& &\Box\psi\\&=&-\int_M \Box_x(\chi Gu_0)(x,y,t)\varphi(y)dV_0(y)\\
&=&-\int_M(\Delta^t\chi)Gu_0(x,y,t)\varphi(y)dV_0(y)-\int_{M}(\chi G\Box_xu_0)(x,y,t)\varphi(y) dV_0(y)\\
&&+2\int_{M}\vv<\nabla^t_x
\chi,\nabla^t_x(Gu_0)>(x,y,t)\varphi(y)dV_0(y)\\
&:=&f_1(x,t)+f_2(x,t)+f_3(x,t)\\
&:=&f(x,t),
\end{eqnarray*}
It is clear that
$f_1$, $f_2$ is bounded on $M\times(0,\delta]$. For $f_3$,
note that $\nabla^t\chi=0$ when $y$ near $x$. This implies that
$f_3$ is also bounded. So, $f$ is bounded. Moreover, $\psi$ it
also clearly bounded.

We are interested in the initial value of $\psi$. It is clear that
\begin{equation*}
\lim_{t\to 0^+}\int_M\mathcal{Z}(x,y,t)\varphi(y)dV(y)=\varphi(x).
\end{equation*}
By direct computation,
\begin{equation*}
\begin{split}
&\lim_{t\to0^+}\int_M \chi(x,y) G(x,y,t)u_0(x,y,t)\varphi(y)dV_0(y)\\
=&\lim_{t\to0^+}\int_MG(x,y,t)[\chi(x,y)u_0(x,y,t)\varphi(y)]dV_t(y)\\
=&\chi(x,x)u_0(x,x,0)\varphi(x)=\varphi(x),
\end{split}
\end{equation*}
since $\chi(x,x)=1$, $u_0(x,x,0)=1$ when $x\in \supp \varphi\subset\Omega$.
Therefore,
\begin{equation}
\lim_{t\to0^+}\psi(x,t)=0.
\end{equation}

By Proposition \ref{prop-integral-representation},
\begin{eqnarray*}
\psi(x,t)&=&\int_{0}^{t}\int_{M}\mathcal{Z}(x,t;z,s)f(z,s)dV_s(z)ds\\
&=&-\int_0^{t}\int_M \mathcal{Z}(x,t;z,s)\int_{M}\Box_z(\chi G
u_0)(z,y,s)\varphi(y)dV_0(y)dV_s(z)ds\\
&=&-\int_{M}\Big(\int_0^{t}\int_M \mathcal{Z}(x,t;z,s)\Box_z(\chi G
u_0)(z,y,s)dV_s(z)ds\Big)\varphi(y)dV_0(y).
\end{eqnarray*}

Note that $\varphi\in C_0^\infty(\Omega)$ is arbitrary. We get the
identity (\ref{eqn-integral-identity}).

\item[Step 3.] Verification of the asymptotic behavior.

By the identity (\ref{eqn-integral-identity}), we have
\begin{equation}
\begin{split}
&\mathcal{Z}(x,y,t)-\chi G u_0(x,y,t)\\
=&-\int_0^{t}\int_M \mathcal{Z}(x,t;z,s)\Box_z(\chi G u_0)(z,y,s)dV_s(z)ds\\
=&-\int_0^{t}\int_M \mathcal{Z}(x,t;z,s)[(\Delta_z^{s}\chi) G u_0](z,y,s)dV_s(z)ds\\
&-\int_0^{t}\int_M \mathcal{Z}(x,t;z,s)(\chi G \Box_zu_0)(z,y,s)dV_s(z)ds\\
&+2\int_0^{t}\int_M
\mathcal{Z}(x,t;z,s)\vv<\nabla^s_z\chi,\nabla^s_z (G u_0)>(z,y,s)dV_s(z)ds\\
:=&I_1+I_2+I_3.
\end{split}
\end{equation}

We first estimate $I_1$,
\begin{equation*}
\begin{split}
|I_1|\leq &C_1\int_0^{t}\int_{M\setminus B^s_{y}(\epsilon/6)}\mathcal{Z}(x,t;z,s) G (z,y,s)dV_s(z)ds\\
\leq&C_2\int_{0}^{t}s^{-\frac{n}{2}}e^{-\frac{a^2}{4s}}\int_{M}\mathcal{Z}(x,t;z,s)dV_s(z)ds\\
\leq&C_3\int_{0}^{t}s^{-\frac{n}{2}}e^{-\frac{a^2}{4s}}ds\\
\leq& C_4 t
\end{split}
\end{equation*}
for any $t\in (0,\delta]$, where $a$ is some positive constant independent of $t$.

Similarly,
\begin{equation*}
|I_3|\leq C_5\int_0^{t}s^{-\frac{n}{2}-1}e^{-\frac{a^2}{4s}}ds\leq C_6 t.
\end{equation*}

We come to estimate $I_2$. By Corollary 5.2 in Chau-Tam-Yu \cite{CTY},
\begin{equation*}
\begin{split}
|I_2|\leq & C_7\int_0^{t}\int_M \mathcal{Z}(x,t;z,s)G(z,y,s)dV_s(z)ds\\
\leq &C_8\int_{0}^{t}(t-s)^{-\frac{n}{2}}s^{-\frac{n}{2}}\int_M \exp\Big(-\frac{b^2 r_0^2(x,z)}{4(t-s)}-\frac{b^2 r_0^{2}(z,y)}{4s}\Big)dV_0(z)ds\\
\leq&C_8\int_{0}^{t}t^{-\frac{n}{2}}\int_M \frac{1}{(\frac{(t-s)s}{t})^\frac{n}{2}}e^{-\frac{b^2\min\{r_0^2(x,z),r_0^{2}(z,y)\}}{4\frac{(t-s)s}{t}}}dV_0(z)ds\\
\leq &C_8t^{-\frac{n}{2}}\int_0^{t}\int_{\{z\in M|r_0(x,z)\leq r_0(y,z)\}}\tau^{-\frac{n}{2}}e^{-\frac{b^2r_0^{2}(x,z)}{4\tau}}dV_0(z)ds\\
&+C_8t^{-\frac{n}{2}}\int_0^{t}\int_{\{z\in M|r_0(y,z)\leq r_0(x,z)\}}\tau^{-\frac{n}{2}}e^{-\frac{b^2r_0^{2}(x,y)}{4\tau}}dV_0(z)ds\\
\leq&C_8t^{-\frac{n}{2}}\int_0^{t}\int_{M}\tau^{-\frac{n}{2}}e^{-\frac{b^2r_0^{2}(x,z)}{4\tau}}dV_0(z)ds+C_8t^{-\frac{n}{2}}\int_0^{t}\int_{M}\tau^{-\frac{n}{2}}e^{-\frac{b^2r_0^{2}(x,y)}{4\tau}}dV_0(z)ds\\
\leq &C_9t^{1-\frac{n}{2}}
\end{split}
\end{equation*}
where $b$ is some positive constant independent of $t$ and $\tau=\frac{(t-s)s}{t}$.
\end{enumerate}
Hence, $\chi u_0$ satisfies our requirements.
\end{proof}
\begin{remark}
We have frequently used the following fact in proof.
\begin{equation*}
\int_{M}t^{-\frac{n}{2}}e^{-\frac{\delta r^2(x,y)}{4t}}dV(y)\leq C
\end{equation*}
for some positive $C$ constant depending only on $\delta$ on the lower bound of the Ricci curvature.

Another fact used in the proof is that
\begin{equation*}
\lim_{t\to 0^+}\int_M\frac{1}{(4\pi t)^\frac{n}{2}}e^{-\frac{r_t^{2}(x,y)}{4t}}\phi(y)dV_t(y)=\phi(y).
\end{equation*}
The proof is just the same as in the case of Euclidean space.
\end{remark}

We come to derive some integral asymptotic behavior of the fundamental solution.
Let $p$ be a fixed point and $u(x,t)=\Z(x,t,p,0)$. Let  $f$ be such that
\begin{equation*}
u=\frac{e^{-f}}{(4\pi t)^{\frac{n}{2}}}.
\end{equation*}
Then
\begin{equation*}
(2\Delta f-\|\nabla f\|^2)u=-2\Delta u+\frac{\|\nabla u\|^2}{u}.
\end{equation*}
Let $\epsilon$ be any small positive number. By Proposition \ref{prop-weighted-Lp-2-order-Z} with $p=1$,
\begin{equation*}
\int_Mt|\Delta u|dV_t\leq C
\end{equation*}
for any $t\in (0,T]$. By Proposition \ref{prop-upper-bound-gradient} with $\delta=\frac{1}{2}$ and Corollary\ref{cor-weighted-Lp-0-order-Z}
with $p=\frac{1}{2}$,
\begin{equation*}
\int_M\frac{t\|\nabla u\|^2}{u} dV_t\leq C
\end{equation*}
for any $t\in (0,T]$. So,
\begin{equation}\label{eqn-L1-t-f}
\int_Mt\Big|2\Delta f-\|\nabla f\|^2\Big|udV_t\leq C
\end{equation}
for any $t>0$.

By exactly the same computation as in the proof of Lemma 7.6 in Chau-Tam-Yu \cite{CTY} using Proposition \ref{prop-asmptotic-1}, we have the following
integral asymptotic behavior.
\begin{proposition}\label{prop-integral-asymptotic-2}
For any bounded nonnegative smooth function $h$ on $M\times
[0,T]$,
\begin{equation*}
\lim_{t\to 0^+}\int_Mfuh dV_t=\frac{n}{2}h(p,0).
\end{equation*}
\end{proposition}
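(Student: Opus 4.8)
The plan is to substitute the pointwise asymptotic of Proposition \ref{prop-asmptotic-1} into $\int_M fuh\,dV_t$, to check that near $p$ the dominant part of $f$ is $r_t^2(x,p)/(4t)$, and to recognize the surviving term as a rescaled Gaussian integral of total mass exactly $n/2$ (indeed $\int_{\R^n}\frac{|y|^2}{4t}\,\frac{e^{-|y|^2/(4t)}}{(4\pi t)^{n/2}}\,dy=\frac n2$, since the Euclidean heat kernel has variance $2nt$). I would write $v(x,t)=(4\pi t)^{n/2}\Z(x,t;p,0)$, so $f=-\log v$ and $fu=-(4\pi t)^{-n/2}v\log v$. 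Fix a relatively compact $\Omega\ni p$ with the $\delta>0$, $u_0$ of Proposition \ref{prop-asmptotic-1} (shrinking $\delta$ so that $\tfrac14 g(0)\le g(t)\le 4g(0)$ on $[0,\delta]$). For a parameter $A>0$ and $t$ small enough that $B_p^0(A\sqrt t)\subset\subset\Omega$ (using $V_p^0(r)\sim\omega_n r^n$ as $r\to0$), decompose
\[
\int_M fuh\,dV_t=\int_{B_p^0(A\sqrt t)}fuh\,dV_t+\int_{M\setminus B_p^0(A\sqrt t)}fuh\,dV_t .
\]

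On the tail $M\setminus B_p^0(A\sqrt t)$ I would use the Gaussian upper bound $\Z(x,t;p,0)\le C[V_p^0(\sqrt t)]^{-1}e^{-r_0^2(x,p)/(Dt)}$ from \cite{CTY} together with $V_p^0(\sqrt t)\ge ct^{n/2}$ to get $v\le Ce^{-r_0^2(x,p)/(Dt)}$; choosing $A$ so large that $Ce^{-A^2/D}<e^{-1}$ forces $v<e^{-1}$ there, hence $f>0$, $|fu|=fu$, and, since $-v\log v$ is increasing on $(0,e^{-1}]$, $0\le fu\le Ct^{-n/2}\tfrac{r_0^2(x,p)}{t}e^{-r_0^2(x,p)/(2Dt)}$. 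Integrating against $h\,dV_t$, using $dV_t\le C\,dV_0$ and the volume growth of $g(0)$ exactly as in the estimates of $I_1,I_2$ in the proof of Proposition \ref{prop-asmptotic-1}, and rescaling $r_0=\sqrt t\,\sigma$, I would obtain
\[
\Big|\int_{M\setminus B_p^0(A\sqrt t)}fuh\,dV_t\Big|\le C\|h\|_\infty\int_A^\infty\sigma^{n+1}e^{-\sigma^2/(3D)}\,d\sigma=:\varepsilon(A),
\]
uniformly for all small $t$, with $\varepsilon(A)\to0$ as $A\to\infty$.

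On the core ball $B_p^0(A\sqrt t)$ one has $r_t^2(x,p)/(4t)\le A^2$ by $g(t)\le 4g(0)$, so $e^{-r_t^2/(4t)}$ is bounded below; since $u_0(x,p,t)\to u_0(p,p,0)=1$ uniformly there and $|v-e^{-r_t^2(x,p)/(4t)}u_0(x,p,t)|\le Ct$ by Proposition \ref{prop-asmptotic-1}, this gives $v=e^{-r_t^2(x,p)/(4t)}u_0(x,p,t)(1+o(1))$ and hence $f=\frac{r_t^2(x,p)}{4t}+o(1)$, $fu=\frac{r_t^2(x,p)}{4t}u+o(1)\cdot u$, uniformly on $B_p^0(A\sqrt t)$ as $t\to0^+$. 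Because $\int_M\Z(x,t;p,0)\,dV_t(x)$ is bounded (Corollary \ref{cor-weighted-Lp-0-order-Z} with exponent $p=1$), the $o(1)\cdot u$ term integrates to $o(1)$, so
\[
\int_{B_p^0(A\sqrt t)}fuh\,dV_t=(1+o(1))\int_{B_p^0(A\sqrt t)}\frac{r_t^2(x,p)}{4t}\,\frac{e^{-r_t^2(x,p)/(4t)}}{(4\pi t)^{n/2}}\,h(x,t)\,dV_t(x)+o(1).
\]
For the remaining integral I would pass to $g(t)$-geodesic normal coordinates $y$ at $p$ (valid on a fixed ball that sandwiches $B_p^0(A\sqrt t)$ for all small $t$, since $g(0)$ and $g(t)$ are comparable), use $r_t(x,p)=|y|$, $dV_t=(1+O(|y|^2))\,dy$, $h(x,t)=h(p,0)+O(|y|+t)$, and rescale $y=2\sqrt t\,z$; by dominated convergence this tends as $t\to0^+$ to $\gamma_n(A)h(p,0)$ with $\gamma_n(A)=\pi^{-n/2}\int_{|z|\le cA}|z|^2e^{-|z|^2}\,dz\uparrow\frac n2$ as $A\to\infty$.

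Combining the two pieces, for each fixed large $A$ one gets $\limsup_{t\to0^+}\big|\int_M fuh\,dV_t-\tfrac n2 h(p,0)\big|\le|\gamma_n(A)-\tfrac n2|\,h(p,0)+\varepsilon(A)$, and letting $A\to\infty$ finishes the proof. The hard part will be the matching of scales in the core ball: the error $O(t^{1-n/2})$ in Proposition \ref{prop-asmptotic-1} is negligible against the Gaussian $G$ only on the range $r_t\lesssim\sqrt{t\log(1/t)}$, while the ``mass'' of $fu$ lives on the scale $r_t\sim\sqrt t$ and carries a genuine Gaussian tail; this is precisely why the argument must be organized around the fixed-$A$ window $r_0\le A\sqrt t$, a separate uniform tail estimate, and a final passage $A\to\infty$, rather than a single application of the asymptotic on all of $\Omega$. (Finiteness of the integrals above is guaranteed by (\ref{eqn-L1-t-f}) and the $L^p$-bounds of Corollary \ref{cor-weighted-Lp-0-order-Z}.)
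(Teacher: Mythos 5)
Your proposal is correct and self-contained. The paper itself gives no argument here but simply defers to Lemma~7.6 of Chau--Tam--Yu, so a line-by-line comparison is not possible; however, the computation that citation must unwind is exactly what you carry out: combine the local asymptotic expansion of Proposition~\ref{prop-asmptotic-1} with the Gaussian upper bound on $\mathcal Z$, split $M$ at the scale $r_0\le A\sqrt t$, show the tail contributes a quantity $\varepsilon(A)$ that is uniformly small in $t$, reduce the core to a rescaled Gaussian second-moment $\pi^{-n/2}\int|z|^2e^{-|z|^2}\,dz=\tfrac n2$, and let $A\to\infty$. Your self-criticism at the end is apt and is precisely why the fixed-$A$ window is needed; your handling of it (monotonicity of $-v\log v$ on $(0,e^{-1}]$ for the tail, positivity and continuity of $u_0$ with $u_0(p,p,0)=1$ for the core, sandwiching of the limiting $z$-region since $g(t)$ and $g(0)$ are merely comparable) closes the gap, and the boundedness of $\int_M\mathcal Z\,dV_t$ from Corollary~\ref{cor-weighted-Lp-0-order-Z} with $p=1$ absorbs the $o(1)\cdot u$ error as you claim.
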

\begin{lemma}\label{lemma-integral-asymptotic-0}
For any nonnegative smooth function $h$ on $M\times [0,T]$
such that $\supp\ h(t)\subset K$ for any $t$ and for some
compact subset $K$ in $M$,
\begin{equation*}
\limsup_{t\to 0^+}\int_M t(2\Delta f-\|\nabla f\|^2)uh
dV_t\leq \frac{n}{2}h(p,0).
\end{equation*}
\end{lemma}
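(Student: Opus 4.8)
The plan is to reduce the statement, via an integration by parts, to a local assertion about $u$ on a ball of radius $\sim\sqrt t$ around $p$, and then to read that off from the Gaussian asymptotics of Proposition \ref{prop-asmptotic-1}.

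First I would use the identity $(2\Delta f-\|\nabla f\|^2)u=-2\Delta u+\|\nabla u\|^2/u$ recorded above, together with $\nabla u=-u\nabla f$, and integrate by parts (legitimate since $h(\cdot,t)$ is supported in the fixed compact $K$ and $u$ is smooth), obtaining
\[
\int_M t(2\Delta f-\|\nabla f\|^2)\,u h\,dV_t=\int_M t\,u\|\nabla f\|^2 h\,dV_t-2t\int_M u\,\Delta^t h\,dV_t .
\]
The last term is $O(t)$, since $\int_M u\,dV_t$ is bounded (Gaussian upper bound for $\Z$) and $|\Delta^t h|\le C$, so it suffices to show $\displaystyle\limsup_{t\to 0^+}\int_M t\,u\|\nabla f\|^2 h\,dV_t\le \frac n2 h(p,0)$. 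Fixing a small $\eta>0$ with $B^t_p(2\eta)$ inside the domain $\Omega$ of Proposition \ref{prop-asmptotic-1} for all small $t$, I would split this integral over $B^t_p(\eta)$ and its complement. On the complement, Proposition \ref{prop-upper-bound-gradient} with a fixed $\delta\in(0,1)$ and the Gaussian bound $u\le C/V^0_p(\sqrt t)$ there give $t\,u\|\nabla f\|^2=t\|\nabla u\|^2/u\le C_\delta\,e^{-\delta\eta^2/Dt}/V^0_p(\sqrt t)$, which contributes $o(1)$ (using $V^0_p(\sqrt t)\asymp t^{n/2}$ for small $t$).

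For the remaining piece I would invoke Proposition \ref{prop-asmptotic-1}: on $B^t_p(\eta)$ one has $u=Gu_0+E$, where $G=(4\pi t)^{-n/2}e^{-r_t^2(\cdot,p)/4t}$, $u_0$ is smooth with $u_0(p,p,t)=1$ and $\nabla_x u_0(p,p,t)=0$ (the latter from the ODE defining $u_0$, as $\partial_t r_t$ and $\partial_r\log\sqrt{\det g(t)}$ vanish at $p$), and $|E|\le Ct^{1-n/2}$. Applying interior parabolic estimates to $w:=u-Gu_0$, which solves $\Box_x w=-G\,\Box_x u_0$ with $\Box_x u_0$ smooth and bounded near $p$, upgrades the $C^0$ bound on $w$ to $\|\nabla_x w\|\le Ct^{(1-n)/2}$ near $p$; at the scale $r_t\sim\sqrt t$ this is a factor $t$ smaller than $\|\nabla_x(Gu_0)\|\sim t^{-(1+n)/2}$, so $\nabla_x u=\nabla_x(Gu_0)(1+O(t))$ there. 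Hence $f=\tfrac{r_t^2}{4t}-\log u_0+O(t)$ and $t\|\nabla f\|^2=\tfrac{r_t^2}{4t}-r_t\langle\nabla_x r_t,\nabla_x\log u_0\rangle+O(t)$ on $B^t_p(\eta)$. Changing variables $x=\exp^t_p(\sqrt t\,v)$ — so $dV_t=t^{n/2}J(\sqrt t\,v,t)\,dv$, $r_t=\sqrt t\,|v|$, $u\,dV_t\to(4\pi)^{-n/2}e^{-|v|^2/4}\,dv$, $h(x,t)\to h(p,0)$, and $r_t\langle\nabla_x r_t,\nabla_x\log u_0\rangle=O(t|v|^2)$ by $\nabla_x u_0(p,p,t)=0$ — and passing to the limit gives
\[
\int_{B^t_p(\eta)}t\,u\|\nabla f\|^2 h\,dV_t\ \longrightarrow\ \frac{h(p,0)}{(4\pi)^{n/2}}\int_{\R^n}\frac{|v|^2}{4}\,e^{-|v|^2/4}\,dv=\frac n2 h(p,0).
\]
With the first step this even yields $\displaystyle\lim_{t\to0^+}\int_M t(2\Delta f-\|\nabla f\|^2)uh\,dV_t=\frac n2 h(p,0)$, hence the stated inequality; if one only wants the latter, the error terms in this step may be handled one-sidedly and more crudely.

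The hard part will be this second step: Proposition \ref{prop-asmptotic-1} controls the remainder $E$ only in $C^0$, so the gradient of $u$ near $p$ has to be recovered by a separate parabolic-regularity argument, and the far-field part discarded without any pointwise lower bound on $u$. An alternative that repackages the difficulty is to set $\Phi(t)=\int_M f u h\,dV_t$: an integration by parts using $u_t=\Delta u-qu$ and $f_t=\Delta f-\|\nabla f\|^2-\tfrac n{2t}+q$ gives
\[
\int_M t\,u\|\nabla f\|^2 h\,dV_t=t\,\Phi'(t)+\frac n2\int_M u h\,dV_t+tR(t),
\]
where $R$ is a sum of integrals bounded on $(0,\delta]$ (using that $u\le C/V^0_p(\sqrt t)$ forces $f$ bounded below, so $\int_M|f|u h\,dV_t$ is bounded), while Proposition \ref{prop-integral-asymptotic-2} gives $\Phi(t)\to\frac n2 h(p,0)$ and $\int_M u h\,dV_t\to h(p,0)$; this reduces the lemma to $\limsup_{t\to 0^+}t\,\Phi'(t)\le 0$, whose proof again rests on the local analysis of $\Phi$ near $t=0$ via Proposition \ref{prop-asmptotic-1} (together with the uniform integrability of (\ref{eqn-L1-t-f})).
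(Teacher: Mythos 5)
Your argument takes a genuinely different, and substantially heavier, route than the paper's. The paper's proof is short: it invokes the pointwise Li--Yau/Hamilton-type gradient estimate from Lemma~4.1 of Chau--Tam--Yu \cite{CTY}, namely for $\alpha>1$, $\epsilon>0$,
\[
\frac{\|\nabla u\|^2}{u^2}-\alpha\,\frac{\Delta u}{u}\le C_1(\alpha,\epsilon)+\frac{(n+\epsilon)\alpha^2}{2t},
\]
writes $\|\nabla u\|^2/u-2\Delta u=u\bigl(\|\nabla u\|^2/u^2-\alpha\Delta u/u\bigr)+(\alpha-2)\Delta u$, multiplies by $t h\ge 0$, uses the above bound on the first group, integrates the second by parts to get $(\alpha-2)t\int_M u\Delta h\,dV_t=O(t)$, and then sends $\alpha\to 1^+$, $\epsilon\to 0^+$ after taking the limsup. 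No reference to Proposition~\ref{prop-asmptotic-1} or to the small-time Gaussian structure near $p$ is needed. Your plan instead reduces the claim to $\int_M tu\|\nabla f\|^2h\,dV_t$ and computes its limit directly; this buys the sharper equality, but it needs ingredients the paper never develops, and as written a few steps have real gaps. In particular: Proposition~\ref{prop-asmptotic-1} controls $w=u-Gu_0$ only in $C^0$, so the bound $\|\nabla_x w\|\le Ct^{(1-n)/2}$ is a separate interior parabolic gradient estimate that you would have to state and prove with constants uniform as $t\to 0$ for the time-dependent operator $\Box$; and the rewrites $\nabla_x u=\nabla_x(Gu_0)(1+O(t))$ and $f=r_t^2/4t-\log u_0+O(t)$ hold only where $Gu_0$ dominates the $C^0$ error $E$, which fails near the edge of $B_p^t(\eta)$ --- one has to restrict to a region such as $r_t\lesssim\sqrt{t\log(1/t)}$ and absorb the outer annulus by Gaussian decay. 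These are fillable, but the single idea you are missing, and which makes the paper's proof so short, is to replace the direct Gaussian analysis by the CTY differential Harnack inequality. Your alternative via $\Phi(t)=\int_M fuh\,dV_t$ does not, as you yourself observe, avoid the local analysis, so it offers no shortcut.
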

\begin{proof}
By Lemma 4.1 in Chau-Tam-Yu \cite{CTY}, for any $\alpha>1$, $\epsilon>0$,
\begin{equation*}
\frac{\|\nabla u\|^2}{u^2}-\alpha\frac{ \Delta u}{u}\leq C_1(\alpha,\epsilon)+\frac{(n+\epsilon)\alpha^2}{2t}.
\end{equation*}

Hence
\begin{equation*}
\begin{split}
&\int_Mt(2\Delta f-\|\nabla f\|^2)uh dV_t\\
=&\int_Mt\Big(\frac{\|\nabla u\|^2}{u}-2\Delta u\Big)h dV_t\\
\leq&C_1t\int_Mu\varphi dV_t+\frac{(n+\epsilon)\alpha^2}{2}\int_MuhdV_t+(\alpha-2)t\int_Mh\Delta u dV_t\\
=& C_1t\int_Muh
dV_t+\frac{(n+\epsilon)\alpha^2}{2}\int_Muh
dV_t+(\alpha-2)t\int_Mu\Delta hdV_t\\
\leq&C_2t+\frac{(n+\epsilon)\alpha^2}{2}\int_Muh dV_t.
\end{split}
\end{equation*}
Then,
\begin{equation*}
\limsup_{t\to 0^+}\int_M t(2\Delta f-\|\nabla f\|^2)uh
dV_t\leq \frac{(n+\epsilon)\alpha^2}{2}h(p,0).
\end{equation*}
Letting $\alpha \to 1^{+}$ and $\epsilon\to 0^+$, we get the result.
\end{proof}
\begin{proposition}\label{prop-integral-asymptotic-1}
For any bounded nonnegative smooth function $h$ on $M\times
[0,T]$,
\begin{equation*}
\limsup_{t\to 0}\int_{M}t(2\Delta f-\|\nabla f\|^2)uhdV_t\leq
\frac{n}{2}h(p,0).
\end{equation*}
\end{proposition}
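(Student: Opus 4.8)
The plan is to deduce the estimate from the compactly-supported case already proved in Lemma~\ref{lemma-integral-asymptotic-0} by a cutoff argument: the measure $t\,|2\Delta f-\|\nabla f\|^2|\,u\,dV_t$ enjoys Gaussian decay away from $p$, uniformly as $t\to 0^+$, so deleting the part of $h$ supported far from $p$ costs only an error that vanishes in the limit.

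First I would strengthen \eqref{eqn-L1-t-f} to a Gaussian-weighted $L^1$ bound: there are constants $C,D>0$, depending only on $n$, $T$ and the uniform bounds of $|q|$, $\|\nabla q\|$, $\|h\|$, $\|Rc\|$, with
\[
\int_M t\,\big|2\Delta f-\|\nabla f\|^2\big|\,u\,e^{\frac{r_0^2(x,p)}{Dt}}\,dV_t\le C\qquad\text{for all }t\in(0,T].
\]
This is obtained exactly as \eqref{eqn-L1-t-f} was, but retaining the weight throughout. Since $(2\Delta f-\|\nabla f\|^2)u=-2\Delta u+\|\nabla u\|^2/u$, the term $\int_M t\,|\Delta u|\,e^{r_0^2(x,p)/(Dt)}\,dV_t$ is controlled by Proposition~\ref{prop-weighted-Lp-2-order-Z} with $p=1$ (and $s=0$, $y=p$), while for $\int_M t\,\|\nabla u\|^2 u^{-1}e^{r_0^2(x,p)/(Dt)}\,dV_t$ one writes $\|\nabla u\|^2/u=(\|\nabla u\|^2/u^{3/2})\,u^{1/2}$, bounds the first factor by $C\,t^{-1}[V_p^0(\sqrt t)]^{-1/2}$ using Proposition~\ref{prop-upper-bound-gradient} with $\delta=\tfrac12$, and then applies Corollary~\ref{cor-weighted-Lp-0-order-Z} with $p=\tfrac12$ to get $\int_M u^{1/2}e^{r_0^2(x,p)/(Dt)}\,dV_t\le C[V_p^0(\sqrt t)]^{1/2}$, so the product is bounded. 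Here one takes $D$ at least as large as the constants appearing in all three cited estimates, which is harmless since enlarging $D$ only weakens the weight. I expect this weighted bound to be the only genuine work; everything after it is bookkeeping.

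Then I would fix a smooth function $\phi$ on $M$ with $0\le\phi\le 1$, $\phi\equiv 1$ on $B_p^0(1)$ and $\phi\equiv 0$ on $M\setminus B_p^0(2)$, and split $h=\phi h+(1-\phi)h$. Since $\phi h$ is bounded, nonnegative and smooth on $M\times[0,T]$ with $\supp(\phi h)(\cdot,t)$ contained in the fixed compact set $\cl{B_p^0(2)}$ for every $t$, Lemma~\ref{lemma-integral-asymptotic-0} applies and gives $\limsup_{t\to0^+}\int_M t(2\Delta f-\|\nabla f\|^2)u\,\phi h\,dV_t\le\frac n2(\phi h)(p,0)=\frac n2 h(p,0)$, using $\phi(p)=1$. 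For the remainder, $(1-\phi)h$ is supported in $M\setminus B_p^0(1)$, where $e^{r_0^2(x,p)/(Dt)}\ge e^{1/(Dt)}$, so the weighted bound yields $\big|\int_M t(2\Delta f-\|\nabla f\|^2)u(1-\phi)h\,dV_t\big|\le C\|h\|_\infty e^{-1/(Dt)}\to 0$ as $t\to 0^+$. Adding the two pieces, via $\limsup(a+b)\le\limsup a+\limsup b$, gives the claimed inequality. The one point to check carefully is that $\phi h$ really satisfies the hypotheses of Lemma~\ref{lemma-integral-asymptotic-0} — nonnegativity, smoothness, and a $t$-uniform compact support — all of which are immediate from the construction of $\phi$.
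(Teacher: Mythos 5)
Your proof is correct, and the route is genuinely different from the paper's. The paper decomposes $h$ via a countable partition of unity $\{\rho_i\}$ on $M$, applies Lemma~\ref{lemma-integral-asymptotic-0} to each compactly supported piece $\rho_i h$, then interchanges $\limsup$ with the infinite sum using \eqref{eqn-L1-t-f} and an appeal to Fatou-type reasoning; the whole contribution from all pieces is kept, and their limiting bounds $\frac n2\rho_i(p)h(p,0)$ are summed. You instead use a single cutoff $\phi$ near $p$ and show the contribution of the far piece $(1-\phi)h$ tends to zero by strengthening \eqref{eqn-L1-t-f} to a Gaussian-weighted version — which is free, since Proposition~\ref{prop-weighted-Lp-2-order-Z}, Proposition~\ref{prop-upper-bound-gradient}, and Corollary~\ref{cor-weighted-Lp-0-order-Z} already carry the weight $e^{r_0^2/(D(t-s))}$, and the paper's own derivation of \eqref{eqn-L1-t-f} simply discards it. What your finite decomposition buys is that it avoids the interchange of $\limsup$ with an infinite series (and, implicitly, any need for uniform control on $\|\Delta(\rho_i h)\|_\infty$ across the partition); the only new ingredient is the weighted bound, whose verification you carry out correctly, and the rest is a clean two-term $\limsup(a+b)\le\limsup a+\limsup b$ with $b\to0$.
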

\begin{proof}
 Let $\{\rho_i\}$ be a partition of unit on $M$. Then,
by Lebesgue's dominant convergence theorem, inequality (\ref{eqn-L1-t-f}), and that $h$ is
bounded,
\begin{equation*}
\int_Mt(2\Delta f-\|\nabla f\|^2)uhdV_t=\sum_{i=1}^\infty
\int_M t(2\Delta f-\|\nabla f\|^2)u\rho_ih dV_t.
\end{equation*}
Furthermore, by Fatou's lemma, inequality (\ref{eqn-L1-t-f}), the last lemma and that
$h$ is bounded,
\begin{equation*}
\begin{split}
\limsup_{t\to 0}\int_Mt(2\Delta f-\|\nabla f\|^2)uh
dV_t\leq&\sum_{i=1}^\infty \limsup_{t\to 0}\int_M t(2\Delta
f-\|\nabla f\|^2)u\rho_ih
dV_t\\
\leq&\sum_{i=1}^\infty\frac{n}{2}\rho_i(p)h(p,0)\\
=&\frac{n}{2}h(p,0).
\end{split}
\end{equation*}
\end{proof}
\begin{remark}
The proposition means that
\begin{equation}
\limsup_{t\to 0^+}t(2\Delta f-\|\nabla f\|^2)u\leq \frac{n}{2}\delta_p
\end{equation}
in the sense of distribution.
\end{remark}

\begin{remark}
By the last two propositions, we know the non-positivity of Perelman's new Li-Yau-Hamilton expression $v$ (Ref. Perelman \cite{P1} ) in the sense
of distributions. By maximum principle, we know that $v$ is non-positive.
\end{remark}

\end{document}